\newtheorem{theorem}{Theorem}
\newtheorem{proposition}[theorem]{Proposition}
\newtheorem{defin}[theorem]{Definition}
\newtheorem{examp}[theorem]{Example}
\newtheorem{rema}[theorem]{Remark}
\title{A Counting Function}
\author{Milan Janji\'c and Boris Petkovi\'c}
\address{Department of Mathematics and Informatics\\
 University of Banja Luka \\
Republic of Srpska, BA}
\begin{document}
\maketitle

\begin{abstract}
We define a counting function that is  related to the
binomial coefficients.
 An explicit formula for this function   is  proved.
 In some particular cases, simpler explicit formulae are derived. We also derive a
  formula for the number of $(0,1)$-matrices, having a fixed number of $1'$s, and having no
  zero rows and zero columns.
 Further, we  show that our function satisfies several recurrence relations.

 The relationship of our counting function with different classes of integers is then  examined.
These classes include: different kind of figurate numbers,
  the number of points on the surface of a square pyramid,
  the magic constants, the truncated square numbers,
  the coefficients of the Chebyshev polynomials,
  the Catalan numbers,
  the Dellanoy numbers,  the Sulanke numbers, the numbers of the   coordination sequences, and the number  of the crystal ball sequences of a cubic lattice.

In the last part of the paper, we prove that several configurations are counted by our function.
Some of these are: the number of spanning subgraphs of the complete bipartite graph,
the number of square  containing in a square,  the number of colorings of points on a line,
 the number of divisors of some particular numbers, the number of all parts in the compositions
  of an integer,
 the numbers of the weak compositions of integers, and the number of particular lattice paths.
 We conclude  by counting
 the number of possible moves of the rook, bishop, and queen on a chessboard.

The most statements in the paper are provided by bijective proofs in terms of  insets, which are defined in the paper. With this we want to show that different configurations may be counted by the same method.
\end{abstract}

\noindent 2000 {\it Mathematics Subject Classification}: Primary
05A10; Secondary 05A19.

\noindent \emph{Keywords: } binomial coefficients, counting functions, Dalannoy numbers, figurate numbers, coordination sequences, lattice paths.

\section{Introduction}

  For a set  $Q=\{q_1,q_2,\ldots,q_n\}$  of positive integers and  a nonnegative integer  $m,$
 we consider  the set $X$ consisting of $n$  blocks
$X_i,\;(i=1,2,\ldots,n),$ $X_i$ having $q_i$ elements, and a block
$Y$ with $m$ elements. We call $X_i$ the main blocks, and $Y$ the additional block of $X.$

\begin{defin}By an $(n+k)$-inset of $X,$ we shall mean an $(n+k)$-subset of $X,$
intersecting each main block. We let $m,n\choose k,Q$  denote the
number of $(n+k)$-insets of $X.$
\end{defin}
In all what follows $m,n,k,Q$ will have the meaning as in the
preceding definition.
 Also, elements of insets, lying either in the same main block or in the additional block, will always be written by increasing indices.
\begin{rema} Note that this function is first defined in Janji\'c paper \cite{MJ}.
\end{rema}

The case $n=0$ also may be considered. Then, there are no main
blocks, so that ${m,0\choose k,\emptyset }={m\choose k}.$ Also,
when each main block has only one element,
  we have  \[{m,n\choose k,Q}= {m\choose k}.\]
Hence, the function $m,n\choose k,Q$  is a
 generalization of the binomial coefficients.

In the case $k=0,$ we obviously have \[{m,n\choose 0,Q}=q_1\cdot
q_2\cdots q_n.\] Thus,  the product  function is a particular case
of our function.

Note that, when $q_1=q_2=\ldots=q_n=q,$ we write ${m,n\choose k,q}$
instead of ${m,n\choose k,Q}.$ In this case, we have
\[{m,n\choose 0,q}=q^n.\]
Some powers may be obtained in a less obvious way.
\begin{proposition} The following formula holds
\[{m,1\choose 2,2}=m^2.\]
\end{proposition}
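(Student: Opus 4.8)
The plan is to count the relevant insets directly from the definition. Here $n=1$, $k=2$, and the common main-block size is $q=2$, so $X$ consists of a single main block $X_1$ with two elements together with an additional block $Y$ of $m$ elements; by definition ${m,1\choose 2,2}$ counts the $(n+k)=3$-subsets of $X$ that intersect $X_1$.

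First I would classify these $3$-insets according to the number $j$ of their elements that lie in $X_1$. Because an inset must meet $X_1$, we have $j\ge 1$, and because $|X_1|=2$, we have $j\le 2$; hence $j\in\{1,2\}$. The remaining $3-j$ elements of the inset are then chosen freely from the $m$ elements of $Y$, with no further restriction.

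Next I would evaluate the two cases separately. For $j=1$ there are $\binom{2}{1}\binom{m}{2}$ insets, and for $j=2$ there are $\binom{2}{2}\binom{m}{1}$ insets. Adding these gives
\[
{m,1\choose 2,2}=\binom{2}{1}\binom{m}{2}+\binom{2}{2}\binom{m}{1}=m(m-1)+m=m^{2},
\]
which is the asserted identity. As a cross-check, I would also note the complementary count: a $3$-inset is precisely a $3$-subset of the $(m+2)$-element set $X$ that is \emph{not} contained in $Y$, so equivalently ${m,1\choose 2,2}=\binom{m+2}{3}-\binom{m}{3}$, and a one-line simplification again yields $m^{2}$.

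I do not anticipate any genuine obstacle in this argument; it is a short enumeration. The only point that requires care is correctly pinning down the admissible intersection sizes with the main block, namely that $j=0$ is excluded by the defining inset condition (the subset must meet $X_1$) while $j\ge 3$ is impossible since $X_1$ has only two elements. Once these bounds are fixed, everything reduces to a routine binomial computation.
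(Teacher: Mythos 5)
Your proof is correct, but it takes a different route from the paper's. You enumerate the $3$-insets by the size $j$ of their intersection with the main block, getting $\binom{2}{1}\binom{m}{2}+\binom{2}{2}\binom{m}{1}=m(m-1)+m=m^2$ (and your complementary count $\binom{m+2}{3}-\binom{m}{3}=m^2$ is also right). The paper instead exhibits an explicit bijection between the $3$-insets and the $m^2$ ordered pairs $(s,t)$ with $s,t\in[m]$: an inset $\{x_1,y_i,y_j\}$ maps to $(i,j)$, $\{x_2,y_i,y_j\}$ to $(j,i)$, and $\{x_1,x_2,y_i\}$ to the diagonal pair $(i,i)$. Your approach is the more routine computation and generalizes immediately to any $q$ and $k$ via the same case split (indeed it is essentially the inclusion--exclusion formula specialized to $n=1$); the paper's bijection buys a combinatorial explanation of \emph{why} the answer is a perfect square -- the two main-block elements encode the order of the pair and the two-element insets encode the diagonal -- which fits the paper's stated program of giving bijective proofs in terms of insets wherever possible. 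Both arguments are complete and correct.
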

\begin{proof}
Let $\{x_1,x_2\}$ be the main  and $\{y_1,y_2,\ldots,y_m\}$ be the additional block of $X.$  It is enough to
define a bijection between $3$-insets of $X$ and the set of
$2$-tuples $(s,t),$ where $s,t\in [m].$ A bijection goes as
follows:

\[\begin{array}{cc}
1.&\{x_1,y_i,y_j\}\leftrightarrow (i,j),\\
2.&\{x_2,y_i,y_j\}\leftrightarrow (j,i),\\
3.&\{x_1,x_2,y_i\}\leftrightarrow (i,i).
\end{array}\]
\end{proof}
\begin{proposition} The following formula is true \[{1,2\choose 1,q}=q^3.\]
\end{proposition}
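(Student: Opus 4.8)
The plan is to prove ${1,2\choose 1,q}=q^3$ by exhibiting an explicit bijection between the $3$-insets of $X$ and the set of ordered triples $(r,s,t)$ with $r,s,t\in[q]$, mirroring the bijective style already used in the proof of ${m,1\choose 2,2}=m^2$. Here the configuration has two main blocks $X_1=\{a_1,\ldots,a_q\}$ and $X_2=\{b_1,\ldots,b_q\}$, each of size $q$, and a single additional element $Y=\{y\}$ since $m=1$. A $3$-inset is a $3$-subset of $X$ that meets both main blocks. Since the total ground set has $2q+1$ elements and we are choosing $3$ of them subject to hitting each of $X_1$ and $X_2$, the first step is to classify the possible ``types'' of such insets according to how the three chosen elements distribute among $X_1$, $X_2$, and $Y$.

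First I would enumerate the types. Because every inset must intersect both $X_1$ and $X_2$, at least one element comes from $X_1$ and at least one from $X_2$, leaving one further element. This gives exactly three cases: (i) two elements from $X_1$ and one from $X_2$, i.e. $\{a_i,a_j,b_k\}$ with $i<j$; (ii) one from $X_1$ and two from $X_2$, i.e. $\{a_i,b_j,b_k\}$ with $j<k$; and (iii) one from $X_1$, one from $X_2$, and the element $y$, i.e. $\{a_i,b_j,y\}$. The next step is to count, or better, to biject each type into the cube $[q]^3$. The natural target count is $\binom{q}{2}q + q\binom{q}{2} + q\cdot q = q^2(q-1)+q^2 = q^3$, which confirms the total; but to stay in the paper's bijective spirit I would produce an honest bijection rather than just add cardinalities.

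The key step is to design the bijection so the three types partition $[q]^3$ cleanly. I would map the ``diagonal-free, first-coordinate-smaller'' region to one type and its reflection to another, reserving the diagonal for the $y$-type. Concretely, I would send type (iii) insets $\{a_i,b_j,y\}$ to triples whose first two coordinates are equal in a controlled way, and use the ordered pairs $(i,j)$ with $i<j$ versus $i>j$ to absorb types (i) and (ii); the combinatorial identity $\binom{q}{2}+\binom{q}{2}+q^2 = q^3$ after regrouping is what one must realize geometrically. A concrete assignment that works is: map $\{a_i,a_j,b_k\}\;(i<j)$ to $(i,j,k)$, map $\{a_i,b_j,b_k\}\;(j<k)$ to $(j,i,k)$ — so the first two coordinates are swapped to occupy the region $i>j$ — and map $\{a_i,b_j,y\}$ to a triple on the diagonal-type locus determined by $(i,j)$; one checks these images are disjoint and exhaust $[q]^3$.

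The main obstacle will be arranging the three pieces so that their images are \emph{exactly} disjoint and jointly cover all of $[q]^3$ without overlap, since the sizes $\binom{q}{2}$, $\binom{q}{2}$, and $q^2$ do not split $[q]^3$ along the obvious ``strictly-less / strictly-greater / equal first two coordinates'' partition (those blocks have sizes $\binom{q}{2}q$, $\binom{q}{2}q$, and $q\cdot q$, which already sum correctly, so the decomposition by the order relation between the first two coordinates is in fact the right one). Thus the real care is in handling the $y$-type: it has $q^2$ insets but only $q$ diagonal triples, so I would instead use the third coordinate to encode the second index $j$ freely and the equality-of-first-two-coordinates condition to mark the type, sending $\{a_i,b_j,y\}\mapsto(i,i,j)$. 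I expect verifying this particular encoding — that $\{a_i,b_j,y\}\mapsto(i,i,j)$ lands precisely on the set of triples with equal first two coordinates while the two strict-inequality regions receive types (i) and (ii) bijectively — to be the crux, after which well-definedness and invertibility are immediate from reading off the coordinates.
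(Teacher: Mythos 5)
Your approach is the same as the paper's: classify the $3$-insets into the three types by how they meet $X_1$, $X_2$, $\{y\}$, and partition $[q]^3$ according to whether the first coordinate is less than, greater than, or equal to the second, sending the $y$-insets $\{a_i,b_j,y\}$ to the diagonal triples $(i,i,j)$. The paper's proof is exactly this bijection. However, your concrete formula for type (ii) is wrong as written: sending $\{a_i,b_j,b_k\}$ with $j<k$ to $(j,i,k)$ does not place the image in the region where the first coordinate exceeds the second (it only forces the first coordinate to be less than the third), so these images collide with the type (i) images $(i,j,k)$, $i<j$ --- for instance $(1,2,3)$ would be hit both by $\{a_1,a_2,b_3\}$ and by $\{a_2,b_1,b_3\}$. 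The fix is the one the paper uses: put the two $X_2$-indices into the first two slots in decreasing order, i.e. $\{a_i,b_j,b_k\}\mapsto(k,j,i)$, which lands bijectively on the triples with first coordinate strictly greater than the second. With that correction your three images are disjoint and cover $[q]^3$, and the proof is complete.
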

\begin{proof}  Let $X_i=\{x_{i1},x_{i2},\ldots, x_{iq}\},\;(i=1,2)$ be the main blocks, and  $Y=\{y\}$  the additional block of $X.$ We need a bijection
 between $3$-insets of $X$ and $3$-tuples $(s,r,t),$ where $r,s,t\in[q].$ A bijection is defined in the following way:
 \[\begin{array}{cc}
 1.&\{x_{1s},x_{1t},x_{2r}\}\leftrightarrow (s,t,r),\\
 2.&\{x_{1s},x_{2t},x_{2r}\}\leftrightarrow (r,t,s),\\
 3.&\{x_{1s},x_{2t},y\}\leftrightarrow (s,s,t).
 \end{array}\]
\end{proof}
\begin{proposition} The following formula is true:
 \begin{equation}\label{pr3}{0,n\choose k,2}=2^{n-k}{n\choose k}.\end{equation}
 \end{proposition}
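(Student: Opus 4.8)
The plan is to count the $(n+k)$-insets directly from the definition, specializing to the present data $m=0$ and $q_1=\cdots=q_n=2$. Since $m=0$, the additional block $Y$ is empty, so every inset is an $(n+k)$-subset of the union of the main blocks $X_1,\dots,X_n$, and by definition it must meet each $X_i$. The essential feature is that each main block now has exactly two elements, which severely restricts how an inset can intersect it.

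First I would observe that, because each $X_i$ has only two elements and the inset must intersect $X_i$, from each main block an inset selects either one or both of its two elements. Suppose an inset takes both elements from exactly $j$ of the blocks and a single element from the remaining $n-j$ blocks. Then its total size is $2j+(n-j)=n+j$. For this to equal the prescribed size $n+k$ we must have $j=k$. Hence every $(n+k)$-inset arises by choosing $k$ of the $n$ blocks to contribute both of their elements, and taking exactly one element from each of the other $n-k$ blocks.

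Next I would count these choices by the product rule. There are $\binom{n}{k}$ ways to select the $k$ blocks that contribute both of their elements. For each of the remaining $n-k$ blocks there are exactly two ways to choose the single element to be included, and these choices are independent, giving $2^{n-k}$ possibilities in all. Multiplying, the number of $(n+k)$-insets is $2^{n-k}\binom{n}{k}$, which is precisely formula \eqref{pr3}.

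There is no genuinely hard step here. The one point that requires care is the bookkeeping in the first paragraph: the constraint that the inset have total size $n+k$ forces exactly $k$ of the blocks to be ``doubled'', and it is this observation that pins down the structure of every inset and makes the subsequent product-rule count valid. Everything else is a routine application of binomial and power counting.
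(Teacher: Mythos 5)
Your proof is correct and is essentially the same argument as the paper's: choose the $k$ blocks contributing both elements in $\binom{n}{k}$ ways and one element from each of the remaining $n-k$ blocks in $2^{n-k}$ ways. Your version just spells out more carefully why exactly $k$ blocks must be ``doubled,'' which the paper leaves implicit.
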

\begin{proof} We obtain ${0,n\choose k,2}$ by choosing  both elements from arbitrary
$k$ main blocks, which may be done in ${n\choose k}$ ways, and
one element from each of the remaining $n-k$ main
blocks, which may be done in $2^{n-k}$ ways.
\end{proof}

The particular case $m=0$ may be interpreted as
numbers of $1$'s in a $(0,1)$-matrix. The following proposition is
obvious:
\begin{proposition}\label{pp1} The number ${0,n\choose k,q}$ equals the number  of $(0,1)$-matrices of order $q\times n$  containing   $n+k$ $1$'s, and which have no zero columns. \end{proposition}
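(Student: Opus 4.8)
The plan is to exhibit a direct bijection between the $(n+k)$-insets counted by ${0,n\choose k,q}$ and the $(0,1)$-matrices described in the statement. Since $m=0$, the additional block $Y$ is empty, so $X$ is just the disjoint union of the $n$ main blocks $X_1,\ldots,X_n$, each of size $q$. Writing $X_i=\{x_{i1},x_{i2},\ldots,x_{iq}\}$, I would index the rows of a $q\times n$ matrix by the second subscript $j\in[q]$ and the columns by the block index $i\in[n]$.

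To any subset $S\subseteq X$ associate the matrix $M(S)$ whose $(j,i)$ entry is $1$ precisely when $x_{ij}\in S$, and $0$ otherwise. This assignment is plainly a bijection between the subsets of $X$ and the $(0,1)$-matrices of order $q\times n$, since specifying which $x_{ij}$ lie in $S$ is the same as specifying which entries of the matrix equal $1$.

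Under this correspondence I would check that the two defining conditions translate as claimed. First, $|S|=n+k$ holds if and only if $M(S)$ contains exactly $n+k$ ones, because each element of $S$ contributes exactly one $1$. Second, $S$ meets the main block $X_i$ if and only if some $x_{ij}$ belongs to $S$, that is, if and only if column $i$ of $M(S)$ is nonzero; hence $S$ is an inset (meets every main block) exactly when $M(S)$ has no zero column. Restricting the bijection to insets of cardinality $n+k$ therefore yields a bijection onto the $(0,1)$-matrices of order $q\times n$ with $n+k$ ones and no zero column, which gives the stated equality.

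There is essentially no obstacle here: the only point worth a word of justification is that ``intersecting each main block'' is literally the assertion that every column is nonzero, and this is immediate from the row/column indexing chosen above. I would also remark that no constraint is placed on the rows, which is why only zero columns, and not zero rows, are excluded; this matches the asymmetry in the definition of an inset, where the intersection condition is imposed on the main blocks alone.
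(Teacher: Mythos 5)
Your proof is correct and matches the paper's intent exactly: the paper simply declares this proposition obvious and gives no proof, and the bijection you spell out (elements of block $X_i$ as the entries of column $i$, so that the inset condition becomes the no-zero-column condition and $|S|=n+k$ becomes the count of $1$'s) is precisely the evident correspondence the authors have in mind.
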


Now, we  count the number of $(0,1)$-matrices with a fixed number
of $1$'s which have no zero rows and zero columns. Let $M(n,k,q)$
denote the number of such matrices of order $q\times n,$  which
have  $n+k$ $1$'s.

\begin{proposition} The following formula is true: \begin{equation}
\label{pr2} M(n,k,q)=\sum_{i=0}^{q}(-1)^{q+i}{q\choose
i}{0,n\choose k,i},\;(q>1).\end{equation} \end{proposition}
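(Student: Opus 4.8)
The plan is to use inclusion–exclusion to pass from matrices with no zero columns (counted by the earlier proposition) to matrices with neither zero rows nor zero columns. Let me work out the combinatorial structure.

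Let me understand the quantities. By Proposition \ref{pp1}, the number $\binom{0,n}{k,q}$ counts $(0,1)$-matrices of order $q \times n$ having $n+k$ ones with no zero columns. We want $M(n,k,q)$, which further forbids zero rows. The key observation is that a matrix with no zero columns and exactly $n+k$ ones, when it happens to have some zero rows, can be viewed as a matrix on a smaller set of nonzero rows. So I would classify matrices counted by $\binom{0,n}{k,q}$ according to their set of nonzero rows.

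Let me set this up more carefully. The clean approach is to express $\binom{0,n}{k,q}$ in terms of the $M$'s and then Möbius-invert. First I would fix the number $i$ of nonzero rows, $0 \le i \le q$. A $q \times n$ matrix with no zero columns, exactly $n+k$ ones, and exactly $i$ nonzero rows is obtained by choosing which $i$ of the $q$ rows are nonzero — in $\binom{q}{i}$ ways — and then placing a matrix of order $i \times n$ with no zero columns and no zero rows on those chosen rows, which by definition can be done in $M(n,k,i)$ ways. This gives the forward relation
\[
\binom{0,n}{k,q} = \sum_{i=0}^{q} \binom{q}{i} M(n,k,i).
\]
Then the stated formula \eqref{pr2} should follow immediately by binomial (Möbius) inversion.

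The main technical step is the inversion, so I would verify the standard binomial inversion lemma is being applied correctly: if $a_q = \sum_{i=0}^q \binom{q}{i} b_i$ for all $q$, then $b_q = \sum_{i=0}^q (-1)^{q-i}\binom{q}{i} a_i$, and here $(-1)^{q-i} = (-1)^{q+i}$, matching the claimed sign. Substituting $a_i = \binom{0,n}{k,i}$ and $b_q = M(n,k,q)$ yields exactly \eqref{pr2}. I would also sanity-check the small or boundary terms — for instance that the $i=0$ contribution behaves correctly (an empty matrix with no rows contributes $M(n,k,0)$, which is $0$ unless $n+k=0$), and note the hypothesis $q>1$ presumably rules out the degenerate cases where the one-row count needs separate handling.

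I expect the only real obstacle to be justifying the decomposition identity cleanly, namely that every no-zero-column matrix decomposes uniquely as (a choice of nonzero-row set) together with (a no-zero-row, no-zero-column matrix on that set), and that the number of ones is preserved. Once this bijective decomposition is stated precisely, both the forward relation and the inversion are routine, so I would spend most of the writeup making the decomposition explicit and invoke binomial inversion by name rather than reproving it.
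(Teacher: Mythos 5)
Your proof is correct and follows essentially the same route as the paper: both establish the forward relation $\binom{0,n}{k,q}=\sum_{i}\binom{q}{i}M(n,k,\cdot)$ by classifying the no-zero-column matrices according to their zero (equivalently, nonzero) rows, and then apply binomial inversion. The only difference is cosmetic — you index by the number of nonzero rows while the paper indexes by the number of zero rows, which is the same sum after the substitution $i\mapsto q-i$.
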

\begin{proof} According to (\ref{pr3}), we have
${0,n\choose k,q}$ $(0,1)$-matrices, which have $n+k$ $1$'s and
 no zero columns. Among them, there are ${q\choose
i}M(n,k,q-i),\;(i=0,1,2,\ldots,q)$ matrices having exactly $i$
zero rows. It follows that
\[{0,n\choose k,q}=\sum_{i=0}^{q}{q\choose i}M(n,k,q-i),\] and
the proof follows from the inversion formula. \end{proof}

Obviously, the function $M(n,k,q)$  has the
property: \[M(n,k,q)=M(q,n+k-q,n).\]

Using (\ref{pr3}) and (\ref{pr2}), we obtain the
 binomial identity:
\[{n\choose k}=\frac{1}{2^{n-k}}\sum_{i=0}^n(-1)^{n+i}{n\choose
i}{0,\;2\choose n+k-2,\;i},\;(k>0).\]
\section{Explicit Formulae and Recurrences}
We first consider the particular case $m=k=1,$ when the explicit
formula for our function is easy to derive.
\begin{proposition} The following equation holds:
\begin{equation}\label{1q}{1,n\choose 1,Q}=q_1q_2\cdots q_n\left(\frac{\sum_{i=1}^nq_i-n+2}{2}\right).\end{equation}
In particular, we have
\begin{equation}\label{2q}{1,2\choose 1,Q}=\frac{q_1q_2(q_1+q_2)}{2}.\end{equation}
\end{proposition}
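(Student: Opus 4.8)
The plan is to count the $(n+1)$-insets directly, by a case analysis on the single element present beyond the minimum needed to meet every main block. Here $m=1$, so $Y=\{y\}$ has one element, and we seek $(n+1)$-subsets of $X$ meeting each of the $n$ main blocks. Since an inset meets every $X_i$, it contains at least one element from each main block; because it has only $n+1$ elements in total, a pigeonhole count forces exactly one of two mutually exclusive situations. Either each main block contributes precisely one element and the remaining element is $y$, or $y$ is absent and exactly one main block contributes two elements while every other contributes exactly one. First I would verify carefully that these two cases are exhaustive and disjoint, since this is the conceptual core of the argument; everything afterwards is bookkeeping.

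Next I would compute each case. In the case where $y$ belongs to the inset, the inset is determined by a single choice from each main block, contributing $q_1q_2\cdots q_n$ insets. In the case where $y$ is excluded, I choose the distinguished block $X_j$ that supplies two elements, pick those two in $\binom{q_j}{2}$ ways, and pick one element from each remaining block in $\prod_{i\neq j}q_i$ ways. Summing over the choice of $j$ gives $\sum_{j=1}^n \binom{q_j}{2}\prod_{i\neq j}q_i$. Writing $P=q_1q_2\cdots q_n$ and using $\prod_{i\neq j}q_i = P/q_j$, each term collapses to $\tfrac12(q_j-1)P$, so this sum becomes $\tfrac{P}{2}\bigl(\sum_{j=1}^n q_j - n\bigr)$.

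Adding the two contributions yields $P + \tfrac{P}{2}\bigl(\sum_{i=1}^n q_i - n\bigr) = P\cdot\tfrac{\sum_{i=1}^n q_i - n + 2}{2}$, which is exactly (\ref{1q}); the particular case (\ref{2q}) then follows by setting $n=2$, since $\tfrac{q_1+q_2-2+2}{2}=\tfrac{q_1+q_2}{2}$. I expect the only delicate step to be the exhaustiveness-and-disjointness claim in the first paragraph: once the count of elements contributed per block is pinned down by pigeonhole, the algebra is routine, and the identity $\prod_{i\neq j}q_i = P/q_j$ does all the real simplification work.
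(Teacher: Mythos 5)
Your proposal is correct and follows essentially the same route as the paper: split the $(n+1)$-insets according to whether the single additional element $y$ is present, count $q_1q_2\cdots q_n$ in the first case and $\sum_{j=1}^n\binom{q_j}{2}\prod_{i\neq j}q_i$ in the second, and simplify. The only difference is that you spell out the pigeonhole justification for exhaustiveness and the algebraic collapse via $\prod_{i\neq j}q_i=P/q_j$, which the paper leaves implicit.
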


\begin{proof}
 If the element of the additional block is inserted into an $(n+1)$-inset, then each of the remaining elements must be chosen from different main blocks. For this, we have $q_1\cdot q_2\cdots q_n$ possibilities.
 If it is not inserted, we take two elements from one of the main blocks and one element from each of the remaining main blocks.
 For this, we have $\sum_{i=1}^n{q_i\choose 2}q_1\cdots q_{i-1}q_{i+1}\cdots q_n$ possibilities. All in all, we have
$q_1q_2\cdots q_n\left(\frac{\sum_{i=1}^nq_i-n+2}{2}\right)$
possibilities.
\end{proof}

Using the inclusion-exclusion principle, we  derive an explicit
formula for $m,n\choose k,Q$.
\begin{proposition} The following formula is true:
 \begin{equation}\label{osfor}{m,n\choose k,Q}=\sum_{I\subseteq
[n]}(-1)^{|I|}{|X|-\sum_{i\in I}q_i\choose n+k},\end{equation}
where the sum is taken over all subsets of $[n].$
\end{proposition}

\begin{proof} For $i=1,2,\ldots,n$
 and an $n+k$-subset $Z$ of $X,$ we define the following  property:

\begin{center} The block $X_i$ does not intersect $Z.$\end{center}

Using the PIE method, we obtain \[{m,n\choose k,Q}=\sum_{I\subseteq
[n]}(-1)^{|I|}N(I),\] where $N(I)$ is the number of
$(n+k)$-subsets of $X,$ which do not intersect main blocks
$X_i,\;(i\in I).$ It is clear that there are
\[{|X|-\sum_{i\in I}q_i\choose n+k}\] such subsets, and the formula is proved.
\end{proof}
In the particular cases $n=1$ and $n=2,$ we obtain the following
formulae:
\begin{equation}\label{sn=1}{m,1\choose k,q}={q+m\choose k+1}-{m\choose k+1},\end{equation}

\begin{equation}\label{ndva}
{m,2\choose k,Q}={q_1+q_2+m\choose k+2}-{q_1+m\choose
k+2}-{q_2+m\choose k+2}+{m\choose k+2}.\end{equation}

In the case $q_1=q_2=\cdots=q_n=q,$  formula (\ref{osfor}) takes
a simpler form: \begin{equation}\label{f1} {m,n\choose
k,q}=\sum_{i=0}^n(-1)^i{n\choose i}{nq+m-iq\choose
n+k}.\end{equation}

If $q=1,$ then ${m,n\choose k,1}={m\choose k},$ so that formula
(\ref{f1}) implies the  well-known binomial identity \[{m\choose
k}=\sum_{i=0}^n(-1)^i{n\choose i}{n+m-i\choose n+k}.\]  Next,
since we have  ${m,n\choose 0,q}=q^n,$  equation (\ref{f1}) yields
\[q^n=\sum_{i=0}^n(-1)^i{n\choose i}{qn+m-qi\choose n}.\]

 Note that the left hand side does not depend
on $m,$ so we have here a family of identities.

We next derive a recurrence relation which stresses the
similarity of our function and the binomial coefficients.

\begin{proposition}The following formula holds: \begin{equation}\label{rek2}{m+1,n\choose k+1,Q }={m,n\choose
k+1,Q}+{m,n\choose k,Q}.\end{equation} \end{proposition}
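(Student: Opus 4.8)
The plan is to give a bijective proof in the spirit of the other results in the paper, mirroring the classical Pascal rule for binomial coefficients. The left-hand side ${m+1,n\choose k+1,Q}$ counts $(n+k+1)$-insets of the configuration whose main blocks are $X_1,\ldots,X_n$ (unchanged) and whose additional block has $m+1$ elements, while both terms on the right refer to the configuration with the same main blocks but an additional block of only $m$ elements. So first I would fix the additional block of the larger configuration to be $Y'=\{y_1,\ldots,y_m,y_{m+1}\}$ and regard $Y=\{y_1,\ldots,y_m\}$ as the additional block of the smaller one, singling out the distinguished element $y_{m+1}$.

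Next I would partition the collection of $(n+k+1)$-insets $Z$ of the larger configuration according to whether $y_{m+1}\in Z$. If $y_{m+1}\notin Z$, then $Z$ is an $(n+k+1)$-subset of $X$ intersecting every main block that uses only elements of $X_1,\ldots,X_n$ and $Y$; that is, it is precisely an $(n+k+1)$-inset of the smaller configuration, and since $n+(k+1)=n+k+1$, such insets are counted by ${m,n\choose k+1,Q}$. If instead $y_{m+1}\in Z$, I would send $Z$ to $Z\setminus\{y_{m+1}\}$. The key observation is that $y_{m+1}$ lies in the additional block, not in any main block, so deleting it cannot destroy the property of meeting every main block; hence $Z\setminus\{y_{m+1}\}$ is an $(n+k)$-inset of the smaller configuration. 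This map is clearly a bijection onto all such $(n+k)$-insets, its inverse being the adjunction of $y_{m+1}$, so these insets are counted by ${m,n\choose k,Q}$.

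Summing the two disjoint cases then yields
\[
{m+1,n\choose k+1,Q}={m,n\choose k+1,Q}+{m,n\choose k,Q},
\]
as claimed. I do not expect any genuine obstacle here: the only point requiring care is the bookkeeping of the parameters, namely checking that an $(n+k+1)$-inset avoiding $y_{m+1}$ really matches the index $k+1$ while deletion of $y_{m+1}$ drops the index to $k$, and confirming that removal of an additional-block element preserves the inset property. Once these are verified the partition is exact and the identity follows immediately.
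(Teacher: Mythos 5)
Your proof is correct and is essentially identical to the paper's: both partition the $(n+k+1)$-insets according to whether they contain the distinguished element $y_{m+1}$ of the enlarged additional block, matching the two classes to the two terms on the right. You simply spell out the bookkeeping (and the fact that deleting an additional-block element preserves the inset property) in more detail than the paper does.
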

\begin{proof} Let $Y=\{y_1,y_2,\ldots,y_m,y_{m+1}\}$ be the
additional block of $X.$ We divide all $(n+k+1)$-insets of $X$
into two classes. In the first class are the insets which do not
contain the element $y_{m+1}.$ There are ${m,n\choose k+1,Q}$ such
insets. The second class consists  of the remaining insets,
namely those that contain $y_{m+1}.$ There are ${m,n\choose
k,Q}$ such insets.
\end{proof}

The next formula reduces the case of arbitrary $m$ to the case
$m=0.$

\begin{proposition} The following formula is true: \begin{equation}\label{prm}{m,n\choose k,Q}=\sum_{i=0}^{m}{m\choose i}{0,n\choose
k-i,Q}.\end{equation} \end{proposition}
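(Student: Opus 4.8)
The plan is to classify the $(n+k)$-insets of $X$ according to how many of their elements come from the additional block $Y$. This is the most natural partition since the formula sums over an index $i$ that should record exactly this number, with the binomial factor $\binom{m}{i}$ counting the choice of $i$ elements from the $m$-element block $Y$.

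First I would fix an integer $i$ with $0\le i\le m$ and consider those $(n+k)$-insets whose intersection with $Y$ has cardinality exactly $i$. Such an inset is built from two independent choices: an $i$-subset of $Y$, contributing the factor $\binom{m}{i}$; and the remaining $n+k-i$ elements, which must all lie in the main blocks $X_1,\ldots,X_n$ and must still meet every main block (since the elements taken from $Y$ do not touch any $X_j$). The key observation is that this second choice is precisely an $(n+(k-i))$-inset of the structure obtained from $X$ by deleting the additional block, i.e.\ the $m=0$ situation with the same main blocks. By definition the number of such configurations is $\binom{0,n}{k-i,Q}$. Because the two choices are independent, the multiplication principle gives $\binom{m}{i}\binom{0,n}{k-i,Q}$ insets in this class, and summing over the disjoint classes $i=0,1,\ldots,m$ yields the claimed identity.

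The one point needing care is the range of the index and the boundary behavior. When $k-i<0$ the term $\binom{0,n}{k-i,Q}$ must vanish: an inset of the main blocks has at least $n$ elements (one per block), so there are no $(n+(k-i))$-insets when $k-i<0$, and the corresponding class is genuinely empty. I would note explicitly that these zero terms are harmless, so the sum may be written over the full range $0\le i\le m$ exactly as stated. No real obstacle arises here; the argument is a clean decomposition and the only subtlety is confirming that the residual main-block selection is counted by the $m=0$ value of the same function, which is immediate from the definition of an inset.
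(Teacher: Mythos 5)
Your proof is correct and follows essentially the same route as the paper's: both classify the $(n+k)$-insets by the size $i$ of their intersection with the additional block $Y$ and apply the multiplication principle to each class. Your added remark about the vanishing of the terms with $k-i<0$ is a welcome clarification but does not change the argument.
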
 \begin{proof}
 We may obtain all $(n+k)$-insets of $X$ in the following way:
 \begin{enumerate}
 \item There are ${0,n\choose k,Q}$ $(n+k)$-insets not containing elements from $Y.$
 \item
  The remaining $(n+k)$-insets of $X,$  are a union of  some $(n+k-i)$-inset of $X,$ not intersecting $Y,$ and some
$i$-subset  of $Y,$  where $1\leq i\leq m.$ There are ${m\choose
i}$ such insets.
\end{enumerate}
\end{proof}

Particularly, we have \[{m,1\choose k,q}=\sum_{i=0}^{m}{m\choose
i}{0,1\choose k-i,q}.\] According to (\ref{sn=1}), we have
\[{0,1\choose k-i,q}={q\choose k-i+1}-{0\choose k-i+1},\;{m,1\choose k,q}={m+q\choose k+1}-{m\choose k+1}.\] As a consequence,  we obtain the Vandermonde convolution:
          \[{q+m\choose
k+1}=\sum_{i=0}^{m}{m\choose i}{q\choose k+1-i}.\]

Using   (\ref{pr3}) and (\ref{prm}), we obtain  another
explicit formula for ${m,n\choose k,2}:$

\begin{equation}\label{q=2}{m,n\choose k,2}=2^{n-k}\sum_{i=0}^m2^{i}{m\choose i}{n\choose k-i}.\end{equation}
Finally, we derive two recurrence relations with respect to the
number of main blocks:
\begin{proposition} Let $j\in [n]$ be arbitrary. Then,
\begin{equation}\label{krk}{m,n\choose
k,Q}=\sum_{i=0}^{q_j-1}{m+i,n-1\choose
k,Q\setminus\{q_j\}}.\end{equation}

\begin{equation}\label{r3}{m,n\choose k,Q}=\sum_{i=1}^{q_j}{q_j\choose i}{m,\;n-1\choose k-i+1, Q\setminus\{q_j\}},\end{equation}
\end{proposition}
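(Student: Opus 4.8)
The plan is to establish both recurrences by decomposing the collection of $(n+k)$-insets of $X$ according to the way each inset meets the distinguished main block $X_j=\{x_{j,1},x_{j,2},\ldots,x_{j,q_j}\}$: formula (\ref{r3}) will come from counting by the \emph{size} of the intersection with $X_j$, while (\ref{krk}) will come from marking the \emph{smallest-index} element lying in $X_j$.

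For (\ref{r3}) I would argue as follows. Let $Z$ be an $(n+k)$-inset and set $i=|Z\cap X_j|$. Since $Z$ must intersect $X_j$, we have $1\le i\le q_j$. The set $Z\cap X_j$ may be any $i$-subset of $X_j$, giving ${q_j\choose i}$ choices. Its complement $Z\setminus X_j$ is a subset of $X\setminus X_j$ that still meets every remaining main block $X_\ell\ (\ell\ne j)$ and has $n+k-i=(n-1)+(k-i+1)$ elements; that is, it is an $\bigl((n-1)+(k-i+1)\bigr)$-inset of the configuration with main blocks $Q\setminus\{q_j\}$ and additional block $Y$. These two choices are independent, so the number of insets with $|Z\cap X_j|=i$ equals ${q_j\choose i}{m,\,n-1\choose k-i+1,\,Q\setminus\{q_j\}}$; summing over $i$ gives (\ref{r3}), the terms with $i>k+1$ vanishing as they must.

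For (\ref{krk}) I would build an explicit bijection. Given an $(n+k)$-inset $Z$, let $x_{j,s}$ be the smallest-index element of $X_j$ contained in $Z$, and put $i=q_j-s$, so that $i$ runs over $0,1,\ldots,q_j-1$. Introduce the reduced configuration $X^{(i)}$ whose main blocks are the $X_\ell\ (\ell\ne j)$ and whose additional block is $Y^{(i)}=Y\cup\{x_{j,s+1},\ldots,x_{j,q_j}\}$, of size $m+(q_j-s)=m+i$. I claim the map $Z\mapsto Z\setminus\{x_{j,s}\}$ carries $Z$ to an $\bigl((n-1)+k\bigr)$-inset of $X^{(i)}$: it has $n+k-1=(n-1)+k$ elements, it still meets each $X_\ell\ (\ell\ne j)$, and its elements of $X_j$ all have index exceeding $s$, hence lie in $Y^{(i)}$. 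The inverse reinserts $x_{j,q_j-i}$ into a given $((n-1)+k)$-inset $W$ of $X^{(i)}$; this is well defined and inverts the forward map because every $X_j$-element of $W$ sits in $Y^{(i)}$ and so has index at least $q_j-i+1>q_j-i$, making $x_{j,q_j-i}$ the new smallest $X_j$-element and recovering the index $i$. Hence the $(n+k)$-insets split as a disjoint union indexed by $i=0,\ldots,q_j-1$, and counting each part by ${m+i,\,n-1\choose k,\,Q\setminus\{q_j\}}$ yields (\ref{krk}).

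The routine half is (\ref{r3}); the main obstacle is the bookkeeping in (\ref{krk}). The crucial idea — that once $x_{j,s}$ is singled out as the minimum, the higher-index survivors $x_{j,s+1},\ldots,x_{j,q_j}$ become unconstrained and should be absorbed into the additional block — must be matched with the correct index $i=q_j-s$ and block size $m+i$, and one must verify that the reinserted element is forced to be the minimum so that the two maps are mutually inverse. As a sanity check, in the case $n=1$ the sum reduces through ${m+i,0\choose k,\emptyset}={m+i\choose k}$ to the statement ${m,1\choose k,q}=\sum_{i=0}^{q-1}{m+i\choose k}$, which by (\ref{sn=1}) is exactly the hockey-stick identity, confirming the bookkeeping.
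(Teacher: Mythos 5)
Your proof is correct and follows essentially the same route as the paper: (\ref{r3}) by partitioning insets according to the size of their intersection with $X_j$, and (\ref{krk}) by distinguishing an extremal element of $X_j$ in the inset and absorbing the remaining admissible elements of $X_j$ into an enlarged additional block. The only cosmetic difference is that you key (\ref{krk}) to the smallest-index element of $X_j$ while the paper uses the largest, which is the same decomposition up to relabeling.
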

\begin{proof}

Take $x_{jt}\in X_j$ arbitrarily. Consider the set $Z_j,$ the main
blocks of which are all the main blocks of $X,$ except $X_j.$
 Let $U=Y\cup\{x_{j1},\ldots,x_{j,t-1}\}$ be the additional block of $Z_j.$ If $T$ is a $(n+k-1)$-inset of $Z,$ then
 $T\cup\{x_{jt}\}$ is the  $(n+k)$-inset of $X$ not containing elements of $X_j,$ the  second index of which is greater
 than $t.$ The converse also holds.
The assertion follows by summing over $t,\;(1\leq t\leq q_j).$
Equation (\ref{krk}) is proved.

Omitting  the $j$th main block  of $X,$ we obtain a  set $Z.$ Each
$n+k$-inset of $X$ may be obtained as a union of some $n+k-i$-
inset of $Z,\;(1\leq i\leq q_j)$  and some of ${q_j\choose
i}$ $i$-subsets of the omitting main block, which proves (\ref{r3}).
\end{proof}

\section{Connections with Other Classes of Integers}
We noted that our function is closely connected with the binomial
coefficients. In this section, we establish its relation to
some other classes of integers.
\begin{proposition} If $n\geq 0,$ then  \[{n,\;2\choose n+2,\;3}=\frac{(n+5)(n+6)}{2},\]
that is,  ${n,\;2\choose n+2,\;3}$ equals the $(n+5)$th
\textbf{triangular number} {A000217}.
\end{proposition}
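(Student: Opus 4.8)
The plan is to invoke the two-block closed form (\ref{ndva}) directly, since ${n,\,2\choose n+2,\,3}$ has exactly two main blocks, each of size $3$, an additional block of size $m=n$, and $k=n+2$. Substituting $q_1=q_2=3$, $m=n$, and $k=n+2$ into (\ref{ndva}) gives
\[
{n,2\choose n+2,3}={n+6\choose n+4}-2{n+3\choose n+4}+{n\choose n+4},
\]
so the entire argument reduces to reading off which of these three binomial coefficients survive.

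The key observation is that the last two terms vanish identically for every $n\ge 0$. Indeed, ${n+3\choose n+4}$ would choose $n+4$ elements from only $n+3$, and ${n\choose n+4}$ would choose $n+4$ from $n$; in both cases the lower index exceeds the upper, so each coefficient is $0$ by the usual convention. This is the only point where any attention is required, and it is a bookkeeping matter rather than a genuine difficulty.

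It then remains to simplify the surviving term. Using the symmetry ${n+6\choose n+4}={n+6\choose 2}$, I would write
\[
{n+6\choose n+4}={n+6\choose 2}=\frac{(n+6)(n+5)}{2},
\]
which is precisely the claimed value and equals the $(n+5)$th triangular number $\frac{(n+5)(n+6)}{2}$. As an independent check, one can instead expand via the uniform-block formula (\ref{f1}) with $q=3$ and $n=2$ main blocks, obtaining $\sum_{i=0}^{2}(-1)^i{2\choose i}{6+n-3i\choose n+4}$; the $i=1$ and $i=2$ summands again have upper index below $n+4$ and drop out, leaving the same ${n+6\choose 2}$.

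I expect no substantive obstacle here: the statement is a short algebraic consequence of the already-established explicit formulae, and the sole subtlety is the routine vanishing of the out-of-range binomial coefficients. If a bijective proof in the spirit of the rest of the paper were wanted instead, one could alternatively exhibit an explicit correspondence between the $(n+4)$-insets meeting both main blocks and a triangular array of size $\frac{(n+5)(n+6)}{2}$, but the computation above is the most economical route and is the one I would present.
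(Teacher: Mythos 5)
Your computation is correct and is exactly the paper's argument: the paper's proof consists of the single line ``The proof follows from (\ref{ndva}),'' and your substitution $q_1=q_2=3$, $m=n$, $k=n+2$, with the two out-of-range binomial coefficients vanishing and ${n+6\choose n+4}={n+6\choose 2}=\frac{(n+5)(n+6)}{2}$, is precisely the omitted verification.
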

\begin{proof} The proof follows from (\ref{ndva}).
\end{proof}

\begin{proposition} If $n\geq 2,$ then
\[{n-1,\;1\choose 1,\;n}=\frac{3n(n-1)}{2},\]
that is, ${n-1,\;1\choose 1,\;n}$ equals the $(n-1)$th
\textbf{triangular  matchstick number} {A045943}.
\end{proposition}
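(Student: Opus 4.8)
The plan is to specialize the explicit formula (\ref{sn=1}) to the case at hand. Formula (\ref{sn=1}) reads
\[{m,1\choose k,q}={q+m\choose k+1}-{m\choose k+1},\]
and since the statement concerns a single main block (here $n=1$ in the sense of one main block of size $q=n$), this is precisely the tool to invoke. Mirroring the proof of the immediately preceding proposition, which simply cites (\ref{ndva}), my proof would cite (\ref{sn=1}).

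First I would substitute $m=n-1$, $k=1$, and $q=n$ into (\ref{sn=1}), obtaining
\[{n-1,\;1\choose 1,\;n}={2n-1\choose 2}-{n-1\choose 2}.\]
Next I would expand the two binomial coefficients: ${2n-1\choose 2}=(2n-1)(n-1)$ and ${n-1\choose 2}=\frac{(n-1)(n-2)}{2}$. Factoring out $(n-1)$ from the difference gives
\[(n-1)\left[(2n-1)-\frac{n-2}{2}\right]=(n-1)\cdot\frac{4n-2-n+2}{2}=\frac{3n(n-1)}{2},\]
which is the claimed value. Since this is the $(n-1)$th term of the triangular matchstick sequence {A045943}, the identification is complete.

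There is essentially no obstacle here: the entire content is the routine algebraic simplification of a difference of two binomials, and the hypothesis $n\ge 2$ only guarantees that the combinatorial interpretation (a main block of at least two elements, together with a nonempty additional block) is meaningful. If one preferred a self-contained bijective argument instead of citing (\ref{sn=1}), one could alternatively count the $2$-insets of a configuration with one main block of size $n$ and an additional block of size $n-1$ directly, splitting by whether the extra element comes from the main block or the additional block; but the formula-based route is shorter and is the natural one given the structure of this section.
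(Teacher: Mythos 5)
Your proof is correct and matches the paper's primary argument, which likewise just cites formula (\ref{sn=1}) and simplifies; your algebra checks out. The paper additionally supplies a second, inset-based proof via the recurrence $TM_n-TM_{n-1}=3n$, essentially the alternative you sketch at the end but do not carry out.
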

\begin{proof} The proof follows from (\ref{sn=1}).

 We also give a proof in terms of insets.
Note first that ${1,1\choose 1,2}=3$ equals the first triangular
matchstick number. Denote $TM_n={n,\;1\choose 1,\;n+1}.$  We want to calculate the difference
$TM_n-TM_{n-1}.$ Consider two
sets $X$ and $Z,$  both having one main block. Let
$\{x_1,x_2,\ldots,x_n,x_{n+1}\},$ $\{x_1,x_2,\ldots,x_n\}$ be the main
blocks of $X$ and $Z$ respectively, and let
$\{y_1,y_2,\ldots,y_{n}\}$ and  $\{y_1,y_2,\ldots,y_{n-1}\}$ be
the additional blocks. The number $TM_n-TM_{n-1}$ equals the number of
$2$-insets of $X,$ which are not insets of $Z.$ Such an inset must
contain either  $x_{n+1}$ or $y_{n}.$ All  insets of this form are
\[\{x_i,y_{n}\},(i=1,2,\ldots,n+1),\},\;\{x_{n+1},y_i\},\;(i=1,2,\ldots,n-1)
\{x_i,x_{n+1}\},(i=1,2,\ldots,n),\]  which are
$3n$ in number. We conclude that \[TM_{n}-TM_{n-1}=3n,\] which is
the recurrence for the triangular  matchstick numbers.
\end{proof}

\begin{proposition} The following formula holds:
\[{n,1\choose 1,n}=\frac{n(3n-1)}{2},\] that is,
  ${n,1\choose 1,n}$ equals the $n$th \textbf{pentagonal
  number} {A000326}.
\end{proposition}
\begin{proof} Firstly, a $2$-inset may consist of pairs of elements from  the main block, and there is ${n\choose 2}$ such pairs. Secondly, it may consist of one element from the main and one element from the additional block. There is $n^2$ such insets. We thus have ${n\choose 2}+n^2=\frac{n(3n-1)}{2}$ $2$-insets.
\end{proof}
\begin{proposition} The following formula is true:
     \[{n,2\choose 1,n}=(2n-1)n^2,\] that is,
${n,2\choose 1,n}$ equals the $n$th
 \textbf{structured hexagonal prism number} {A015237}.
\end{proposition}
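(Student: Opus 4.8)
The plan is to specialize the two-block formula (\ref{ndva}) to the present parameters. Reading off the symbol ${n,2\choose 1,n}$, the additional block has $m=n$ elements, there are two main blocks with $q_1=q_2=n$ elements, and we seek $(2+1)$-insets, so $k=1$ and $k+2=3$. Substituting into (\ref{ndva}) collapses the four binomials into
\[{n,2\choose 1,n}={3n\choose 3}-2{2n\choose 3}+{n\choose 3}.\]
It then remains only to expand each binomial as a cubic in $n$ and combine. Writing every term over the common denominator $6$, the $n^3$ coefficients contribute $27-16+1=12$, the $n^2$ coefficients contribute $-27+24-3=-6$, and the linear terms cancel, leaving numerator $12n^3-6n^2=6n^2(2n-1)$, hence $(2n-1)n^2$ after division by $6$. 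The only point requiring care is this arithmetic; there is no conceptual obstacle.

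Alternatively, and more in the spirit of the paper, I would give a direct inset count. Here $X$ has two main blocks $X_1,X_2$ of size $n$ and an additional block $Y$ of size $n$, and a $3$-inset is a $3$-subset meeting both $X_1$ and $X_2$. I would classify such a subset by the distribution of its three elements among $(X_1,X_2,Y)$. Since the inset must contain at least one element of each main block and has size three, the only admissible patterns are $(1,1,1)$, $(2,1,0)$, and $(1,2,0)$. These contribute $n\cdot n\cdot n$, ${n\choose 2}\cdot n$, and $n\cdot{n\choose 2}$ insets respectively, and summing gives $n^3+2n{n\choose 2}=n^3+n^2(n-1)=(2n-1)n^2$, which is the $n$th structured hexagonal prism number A015237.

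The step I expect to be the main obstacle in the second route is confirming that the three listed patterns are exhaustive and mutually disjoint. Exhaustiveness holds because any size-three subset meeting both main blocks must place either one or two elements in each of $X_1,X_2$, with the remainder (at most one) lying in $Y$; disjointness is immediate from the distinct distribution vectors. I expect this combinatorial argument to be the cleaner presentation, with the formula-based computation of the preceding paragraph serving as an independent check.
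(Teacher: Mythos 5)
Both of your arguments are correct. The paper actually states this proposition without any proof, so there is nothing to compare against directly; the nearest analogues are the adjacent propositions whose proofs read ``follows from (\ref{ndva})'' or are given as short inset classifications, and your two routes reproduce exactly these two styles. Your first computation is right: with $m=q_1=q_2=n$ and $k=1$, formula (\ref{ndva}) gives ${3n\choose 3}-2{2n\choose 3}+{n\choose 3}$, and the coefficient bookkeeping ($12n^3-6n^2$ over $6$) is accurate. Your second argument is also sound: a $3$-subset meeting both main blocks must distribute as $(1,1,1)$, $(2,1,0)$, or $(1,2,0)$ over $(X_1,X_2,Y)$, giving $n^3+2n{n\choose 2}=(2n-1)n^2$; the exhaustiveness check you flag is indeed the only point needing a sentence, and you handle it. The inset count is the cleaner choice for this paper's expository aims, with the binomial computation serving as a check, exactly as you propose.
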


\begin{proposition}  If $Q=\{2,3\},$ then \[{m,2\choose 2,Q}=3(m+1)^2+2,\]
that is, ${m,2\choose 2,Q}$ equals the \textbf{number of points on
the surface of a square pyramid} {A005918}.
\end{proposition}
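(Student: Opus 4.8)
The plan is to specialize the general two-block formula (\ref{ndva}) to these parameters and then reduce the resulting combination of binomial coefficients to a quadratic in $m$; this is exactly the route used for the earlier figurate-number propositions. Putting $q_1=2$, $q_2=3$, and $k=2$ in (\ref{ndva}) gives
\[
{m,2\choose 2,Q}={m+5\choose 4}-{m+2\choose 4}-{m+3\choose 4}+{m\choose 4},
\]
so the whole statement reduces to the polynomial identity
\[
{m+5\choose 4}-{m+2\choose 4}-{m+3\choose 4}+{m\choose 4}=3(m+1)^2+2.
\]

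To verify this, I would write each term as ${m+j\choose 4}=\frac{1}{24}(m+j)(m+j-1)(m+j-2)(m+j-3)$ for $j\in\{5,2,3,0\}$. A priori the left-hand side has degree four, but the signs $+1,-1,-1,+1$ sum to $0$, which kills the $m^4$ term, and a short check shows the $m^3$ term cancels as well, leaving a quadratic whose $m^2$-coefficient works out to $3$, matching the leading coefficient of $3(m+1)^2$. Rather than expand everything, it is cleanest to record that both sides are polynomials of degree at most two (once the two top-degree cancellations are noted) and then check agreement at three points: $m=0,1,2$ give $5,14,29$ on each side, which forces the identity.

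Alternatively, and more in keeping with the paper's philosophy, I would count the $4$-insets directly, since here $n+k=4$. I would classify each $4$-inset by the pair $(a,b)$, where $a$ is the number of chosen elements in the size-$2$ main block and $b$ the number in the size-$3$ main block, the remaining $4-a-b$ elements coming from the $m$-element additional block. Summing ${2\choose a}{3\choose b}{m\choose 4-a-b}$ over the admissible range $1\le a\le 2$, $1\le b\le 3$, $a+b\le 4$ yields $6{m\choose 2}+9m+5=3m^2+6m+5=3(m+1)^2+2$. There is no genuine obstacle in either route; the only thing to watch is the bookkeeping that the degree-three term cancels in the algebraic route, or equivalently that the case $(a,b)=(2,3)$ is excluded (since $a+b=5>4$) in the counting route.
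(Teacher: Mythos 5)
Your second route---classifying $4$-insets by how many elements are drawn from each main block and summing ${2\choose a}{3\choose b}{m\choose 4-a-b}$---is exactly the paper's proof, which tabulates the same five cases and arrives at $3+3m+2+6m+6{m\choose 2}=3(m+1)^2+2$. Both that count and your alternative route via (\ref{ndva}) (where the cancellation of the $m^4$ and $m^3$ terms correctly justifies checking only three values) are sound, so the proposal is correct and takes essentially the same approach as the paper.
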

We give a short proof in terms of insets.
\begin{proof} Let $X_1=\{x_{11},x_{12}\},\;X_2=\{x_{21},x_{22},x_{23}\}$ be the main blocks,
 and $y=\{y_1,y_2,\ldots,y_n\}$ be the additional block of $X.$ In the next table we write different types of $4$-insets and its numbers.
\[\begin{array}{cc}\text{$4$-insets}&\text{its number}\\
\{x_{11},x_{12},x_{2i},x_{2j}\},&3,\\
\{x_{11},x_{12},x_{2i},y_j\},&3m,\\
\{x_{1i},x_{21},x_{22},x_{23}\},&2,\\
\{x_{1i},x_{2j},x_{2k},y_s\}&6m,\\
\{x_{1i},x_{2j},y_{k},y_s\},&6{m\choose 2}.
\end{array}\]
We have $3(m+1)^2+2$ $4$-insets in total.

\end{proof}

    It follows from (\ref{rek2}) that the numbers ${m,n\choose k,Q}$ form a Pascal-like
array, in which the  first row ($m=0$) begins with $q_1\cdot
q_2\cdots q_n.$

In the particular case  $n=1,$ the first row is \[{q\choose
1},{q\choose 2},\ldots,{q\choose q}.\] Hence, if $q=2,$  the first
row is $2,1$, so that we obtain the reverse Lucas triangle
{A029653}.  We note  one property of this triangle
connected with the \textbf{figurate numbers}. The third
column consists of $2$-dimensional square numbers, the
forth column consists of $3$-dimensional square
 numbers, and so on. We conclude from this that the following proposition is true:
\begin{proposition}For $m>0,\;k>2,$ the number  ${m,1\choose k,2}$ equals the $m$th
 $k$-dimensional \textbf{square pyramidal number} {A000330}.
 \end{proposition}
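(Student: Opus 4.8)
The plan is to collapse the statement to a two-term binomial identity and then recognise that identity as the figurate number in question. First I would specialise the recurrence (\ref{krk}) to $n=1$ and $Q=\{2\}$. Since then $Q\setminus\{2\}=\emptyset$ and $\binom{m}{k}={m,0\choose k,\emptyset}$ by the base convention recorded in the introduction, the sum over $i=0,1$ has just two terms and yields
\[{m,1\choose k,2}={m\choose k}+{m+1\choose k}.\]
Equivalently, one may start from (\ref{sn=1}) with $q=2$, writing ${m,1\choose k,2}={m+2\choose k+1}-{m\choose k+1}$, and apply Pascal's rule twice to reach the same expression. Either route is a one-line computation, so this part presents no difficulty.

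Next I would identify $\binom{m}{k}+\binom{m+1}{k}$ as the $k$-dimensional square pyramidal number, organised by dimension. The base dimension is already available: for $k=2$ the identity gives $\binom{m}{2}+\binom{m+1}{2}=m^2$, the $m$th square number, matching the earlier proposition ${m,1\choose 2,2}=m^2$. To climb one dimension I would use that the $(k+1)$-dimensional figurate numbers are the partial sums of the $k$-dimensional ones, and check that the sequence $\bigl(\binom{m}{k}+\binom{m+1}{k}\bigr)_m$ obeys exactly this rule: by the hockey-stick identity its partial sums again have the two-term shape $\binom{\cdot}{k+1}+\binom{\cdot}{k+1}$, i.e. the same shape with $k$ replaced by $k+1$. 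At the inset level this summation is nothing but the diagonal recurrence (\ref{rek2}), which for $n=1,\ Q=\{2\}$ reads ${m+1,1\choose k+1,2}={m,1\choose k+1,2}+{m,1\choose k,2}$; telescoping this in $m$ expresses column $k+1$ of the reverse Lucas triangle as the partial sums of column $k$, giving a purely internal proof that the columns march up the figurate hierarchy. In particular, for $k=3$ the expression $\binom{m}{3}+\binom{m+1}{3}$ runs through the square pyramidal numbers A000330.

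The step I expect to be the real obstacle is not the algebra but the bookkeeping of the index. In its bare form "square pyramidal number" (A000330) denotes the $3$-dimensional sequence $\binom{n+1}{3}+\binom{n+2}{3}$, so matching it against $\binom{m}{3}+\binom{m+1}{3}$ forces a shift of the argument, and the partial-sum passage from dimension $k$ to $k+1$ introduces a further shift, so a naive reading would make the result off by a $k$-dependent constant. I would therefore fix the indexing convention once and for all -- reading the $m$th entry of column $k$ of the reverse Lucas triangle as the $m$th $k$-dimensional square pyramidal number -- and verify that under this convention the endpoints of the hockey-stick sums land correctly, so that the claimed ``$m$th'' is exact rather than shifted. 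Once the convention is pinned down, the proof reduces to the one-line collapse of (\ref{krk}) followed by the dimensional identification above.
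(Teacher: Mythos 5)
Your argument is correct, and it fills in details the paper leaves implicit, but it reaches the key identity by a different mechanism than the paper's featured proof. The paper first waves at ``the preceding notes'' (the observation that the array ${m,1\choose k,2}$ is the reverse Lucas triangle A029653, whose columns run up the figurate hierarchy) and then devotes its actual proof to a \emph{bijection}: it realizes ${m,1\choose k,2}={m\choose k}+{m+1\choose k}$ combinatorially by matching $(k+1)$-insets of $X$ with the $k$-subsets of two disjoint sets $A$, $B$ of sizes $m$ and $m+1$, splitting according to whether the inset contains $x_1$, $x_2$, or both. You instead obtain the same two-term identity algebraically --- either by collapsing (\ref{krk}) with $n=1$, $Q=\{2\}$ to ${m,0\choose k,\emptyset}+{m+1,0\choose k,\emptyset}$, or from (\ref{sn=1}) plus Pascal's rule --- and then you do the work the paper skips: you verify the figurate identification by induction on dimension, using the base case ${m\choose 2}+{m+1\choose 2}=m^2$ and the fact that telescoping (\ref{rek2}) (equivalently, the hockey-stick identity) exhibits column $k+1$ as partial sums of column $k$. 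Your version buys a self-contained justification of the ``preceding notes'' and an honest treatment of the off-by-one indexing against A000330 (which the paper glosses over); the paper's version buys a bijective proof in the spirit of its stated program of counting everything by insets. Both are valid; neither step in your proposal fails.
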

\begin{proof} The proof follows from the preceding notes. We also  give a short
 bijective proof. According to (\ref{ndva}),
we have \[{m,1\choose k,2}={m\choose k}+{m+1\choose k}.\] Let
$X_1=\{x_1,x_2\}$ be the main, and  $Y=\{y_1,y_2,\ldots,y_m\}$ the
additional block of $X$.
 Consider two disjoint  sets $A=\{a_1,a_2,\ldots,a_m\},\;B=\{b_1,b_2,\ldots,b_{m+1}\}.$
 Let the set $C$ consist of
  $k$-subsets of $A$ and $k$- subsets of $B.$  We need
to define a bijection between the set of $(k+1)$-insets of $X$ and
the set $C.$ A bijection goes as follows:
 \[\begin{array}{cc} 1.& \{x_1,x_2,y_{i_1},y_{i_2},\ldots,y_{i_{k-1}}\}
\leftrightarrow \{b_{i_1},b_{i_2},\ldots,b_{i_{k-1}},b_{m+1}\},\\
2.&\{x_1,y_{i_1},y_{i_2},\ldots,y_{i_{k}}\} \leftrightarrow
\{b_{i_1},b_{i_2},\ldots,b_{i_{k-1}},b_{i_k}\},\\
3.&\{x_2,y_{i_1},y_{i_2},\ldots,y_{i_{k}}\} \leftrightarrow
\{a_{i_1},a_{i_2},\ldots,a_{i_{k-1}},a_{i_k}\}. \end{array}\]
\end{proof}

The following result follows from the fact that, for $q=3,$ the
third column $1,4,10,19,31,\ldots$ of the above array consists of
the centered triangular numbers.
\begin{proposition} For $m>0,\;k>1,$ the number  ${m,1\choose k,3}$ equals the $(m+1)$th
$k$-dimensional \textbf{centered triangular number}
{A047010}.
\end{proposition}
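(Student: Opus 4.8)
The plan is to derive a closed form from the explicit formula already in hand and then recognise it as the centered figurate number. First I would set $q=3$ in (\ref{sn=1}) to get
\[{m,1\choose k,3}={m+3\choose k+1}-{m\choose k+1}.\]
Then I would apply Pascal's rule ${a\choose b}={a-1\choose b}+{a-1\choose b-1}$ repeatedly to telescope the difference, writing
\[{m+3\choose k+1}-{m\choose k+1}=\left({m+3\choose k+1}-{m+2\choose k+1}\right)+\left({m+2\choose k+1}-{m+1\choose k+1}\right)+\left({m+1\choose k+1}-{m\choose k+1}\right),\]
so that each bracket collapses to a single binomial coefficient and
\[{m,1\choose k,3}={m+2\choose k}+{m+1\choose k}+{m\choose k}.\]
This sum of three consecutive binomial coefficients is exactly the formula for the $(m+1)$th $k$-dimensional centered triangular number {A047010}; as a sanity check, at $k=2$ it equals $\frac{3m^2+3m+2}{2}$, reproducing the sequence $1,4,10,19,31,\ldots$ of ordinary centered triangular numbers already observed in the $q=3$ column.

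To stay in the spirit of the preceding proposition, I would also give a bijective proof. Let $X_1=\{x_1,x_2,x_3\}$ be the main block and $Y=\{y_1,\ldots,y_m\}$ the additional block, and introduce three pairwise disjoint sets $A,B,C$ of sizes $m,\,m+1,\,m+2$. I would then set up a bijection between the $(k+1)$-insets of $X$ and the union of the $k$-subsets of $A$, of $B$, and of $C$, organised by how many of $x_1,x_2,x_3$ an inset uses: insets using a single $x_i$ number $3{m\choose k}$, those using two of them number $3{m\choose k-1}$, and the single type using all three numbers ${m\choose k-2}$. The $k$-subsets of $A$ would absorb the insets containing only $x_3$; the $k$-subsets of $B$ would be split by whether they contain $b_{m+1}$, matching ``$x_2$ alone'' against ``$x_2,x_3$''; and the $k$-subsets of $C$ would be split by which of its two top elements $c_{m+1},c_{m+2}$ they contain, matching ``$x_1$ alone'', ``$x_1,x_2$'', ``$x_1,x_3$'' and ``$x_1,x_2,x_3$'' in turn. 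The counts agree because ${m+1\choose k}={m\choose k}+{m\choose k-1}$ and ${m+2\choose k}={m\choose k}+2{m\choose k-1}+{m\choose k-2}$.

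The one genuine obstacle is the final identification: I must confirm that the three-term sum ${m+2\choose k}+{m+1\choose k}+{m\choose k}$ is indeed the registered formula for the $k$-dimensional centered triangular numbers of {A047010}, i.e. pin down how the higher-dimensional analogues are defined (the $k=2$ check gives confidence but is not a proof for general $k$). For the bijective route the analogous point is the bookkeeping that matches the partition of insets by used main-block elements to the partition of $k$-subsets of $B$ and $C$ by containment of their top one or two elements; this is routine once the correspondence is spelled out. Everything else is a direct consequence of Pascal's rule.
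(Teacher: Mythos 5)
Your proposal is correct, and it is in fact more explicit than what the paper offers: for this proposition the paper gives no computation at all, merely remarking that the claim ``follows from the fact that, for $q=3$, the third column $1,4,10,19,31,\ldots$ of the above array consists of the centered triangular numbers'' --- i.e.\ it leans on the Pascal-like recurrence (\ref{rek2}) and the observation that successive columns of the array are partial sums of the previous one, which is exactly how the $k$-dimensional figurate numbers are built from the $2$-dimensional ones. Your telescoping of ${m+3\choose k+1}-{m\choose k+1}$ into ${m+2\choose k}+{m+1\choose k}+{m\choose k}$ is a clean closed form the paper never states, and the obstacle you flag at the end closes itself by the same partial-sum observation: if $T_k(m)={m+2\choose k}+{m+1\choose k}+{m\choose k}$, then Pascal's rule gives $T_{k+1}(m)-T_{k+1}(m-1)=T_k(m)$, so with your verified base case $T_2(m)=\frac{3m^2+3m+2}{2}$ the identification with the iterated-partial-sum definition of the $k$-dimensional centered triangular numbers follows by induction on $k$. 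Your supplementary bijection with sets $A,B,C$ of sizes $m,m+1,m+2$ is also sound (the inset count $3{m\choose k}+3{m\choose k-1}+{m\choose k-2}$ matches ${m\choose k}+{m+1\choose k}+{m+2\choose k}$) and is the natural three-block analogue of the bijective proof the paper gives for the preceding $q=2$ proposition on square pyramidal numbers; the paper does not supply such a bijection here, so this is a genuine addition in the spirit of the paper's stated aim.
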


For $q=4,$ the array consists of $m$-dimensional centered
tetrahedral numbers, and so on.
Hence,
\begin{proposition} For $m>0,\;k>1,$ the number  ${m,1\choose k,4}$ equals the $(m+1)$th $k$-dimensional \textbf{centered
tetrahedral  number} {A047030}.\end{proposition}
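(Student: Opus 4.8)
The plan is to reduce the statement to the single-main-block formula (\ref{sn=1}) and then telescope. Substituting $q=4$ into (\ref{sn=1}) gives at once
\[{m,1\choose k,4}={m+4\choose k+1}-{m\choose k+1}.\]
I would expand the right-hand side by applying Pascal's rule ${a\choose b}={a-1\choose b-1}+{a-1\choose b}$ four times, peeling one unit off the upper index at each stage; the intermediate terms with lower argument $k+1$ cancel telescopically and leave
\[{m,1\choose k,4}={m\choose k}+{m+1\choose k}+{m+2\choose k}+{m+3\choose k}.\]
This is precisely the $q=4$ case of the uniform identity ${m,1\choose k,q}=\sum_{j=0}^{q-1}{m+j\choose k}$, which specialises to ${m\choose k}+{m+1\choose k}$ for the square pyramidal numbers and to ${m\choose k}+{m+1\choose k}+{m+2\choose k}$ for the centered triangular numbers, so the present proposition belongs to the same family as the two preceding ones.

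It then remains to identify this closed form with the $k$-dimensional centered tetrahedral numbers catalogued under A047030. The cleanest check is to evaluate the column $k=3$: the formula produces $1,5,15,35,69,121,\ldots$ as $m=0,1,2,\ldots$, which are exactly the ordinary (three-dimensional) centered tetrahedral numbers, and raising $k$ by one passes to the next dimension of the family. The $(m+1)$th-term wording merely reflects that the entry in row $m$ (counted from $m=0$) is the $(m+1)$th member of the one-indexed sequence. I expect the only substantive work to be confirming that ${m+4\choose k+1}-{m\choose k+1}$ coincides with the catalogued closed form in each dimension; for $k=3$ this amounts to verifying that ${m+4\choose 4}-{m\choose 4}$ equals the standard centered tetrahedral polynomial, a routine polynomial identity, and this normalisation bookkeeping is the main (and only mild) obstacle.

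Alternatively, one can mirror the bijective proof of the square pyramidal proposition. Writing $X_1=\{x_1,x_2,x_3,x_4\}$ for the main block and $Y=\{y_1,\ldots,y_m\}$ for the additional block, the identity ${m,1\choose k,4}=\sum_{j=0}^{3}{m+j\choose k}$ suggests a bijection between the $(k+1)$-insets of $X$ and the disjoint union of the families of $k$-subsets of four ground sets of sizes $m,m+1,m+2,m+3$. One would classify each inset by how many of $x_1,x_2,x_3,x_4$ it contains and encode its $Y$-elements, together with suitable auxiliary labels playing the role of the surplus main-block elements, as a $k$-subset of the corresponding ground set, and then check injectivity and surjectivity on each of the four parts. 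Since the telescoping already makes the counts transparent, I would present the algebraic derivation as the proof and only indicate this bijection, exactly as the earlier propositions in this section do.
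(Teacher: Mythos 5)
Your derivation is correct, and in fact it supplies more than the paper does: for this proposition the paper gives no proof at all, merely asserting that ``for $q=4$, the array consists of centered tetrahedral numbers'' as a consequence of the Pascal-like structure generated by the recurrence (\ref{rek2}) from the first row ${4\choose 1},{4\choose 2},{4\choose 3},{4\choose 4}$. Your route --- specialising (\ref{sn=1}) to $q=4$, telescoping with Pascal's rule to get
\[{m,1\choose k,4}={m\choose k}+{m+1\choose k}+{m+2\choose k}+{m+3\choose k},\]
and then checking that the $k=3$ column $1,5,15,35,69,121,\ldots$ is the classical centered tetrahedral sequence while incrementing $k$ raises the dimension --- is exactly the explicit justification the paper leaves implicit, and it parallels the proof the paper does write out for the $q=2$ (square pyramidal) case, where the analogous identity ${m,1\choose k,2}={m\choose k}+{m+1\choose k}$ is established and accompanied by a bijection onto $k$-subsets of two ground sets of sizes $m$ and $m+1$. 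Your general identity ${m,1\choose k,q}=\sum_{j=0}^{q-1}{m+j\choose k}$ correctly unifies the three figurate-number propositions, and your sketched four-part bijection is the natural extension of the paper's two-part one. The only substantive content you rightly flag as remaining is the identification of the closed form with the OEIS entry A047030, which is indeed just normalisation bookkeeping; the paper sidesteps even that by taking the array description of the sequence as its definition.
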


The fourth column ( omitting two first terms $1$ and $5$), in the
case $q=3,$ consists of numbers $15,34,65,111,\ldots,$ which are
of the form $\frac{m(m^2+1)}{2},\;(m=3,4,\ldots).$ This fact
connects our function with the \textbf{magic constants} {A006003}.
 \begin{proposition} For $m>2,$ the number ${m,1\choose 3,3}$ equals the
 magic constant for  the standard $m\times m$ magic
 square.
 \end{proposition}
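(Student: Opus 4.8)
The plan is to reduce the claim to the single–main-block explicit formula (\ref{sn=1}) and then match the resulting polynomial in $m$ against the closed form for the magic constant. First I would recall that the standard $m\times m$ magic square uses the entries $1,2,\ldots,m^2$, so the sum of all its entries is $\tfrac{m^2(m^2+1)}{2}$; since this total is split equally among the $m$ rows, the common row sum --- the magic constant --- is $\tfrac{1}{m}\cdot\tfrac{m^2(m^2+1)}{2}=\tfrac{m(m^2+1)}{2}$. This is the target value, and it also matches the general term $\tfrac{m(m^2+1)}{2}$ already observed for the fourth column in the $q=3$ case in the notes preceding the statement.

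Next I would specialize (\ref{sn=1}) to $q=3$ and $k=3$, which gives
\[{m,1\choose 3,3}={m+3\choose 4}-{m\choose 4}.\]
The remaining step is purely computational: expand both binomial coefficients as degree-four polynomials in $m$, factor out $\tfrac{m}{24}$, and observe that in the bracket $(m+3)(m+2)(m+1)-(m-1)(m-2)(m-3)$ the cubic and linear terms cancel while the quadratic and constant terms double, leaving $\tfrac{m}{24}\bigl(12m^2+12\bigr)=\tfrac{m(m^2+1)}{2}$. Comparing this with the magic constant computed above completes the argument.

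I do not expect a genuine obstacle here: the conceptual content is entirely carried by the previously proved formula (\ref{sn=1}), and the only thing that could go wrong is an arithmetic slip in the symmetric expansion above. The restriction $m>2$ plays no role in the algebra --- it merely records that ordinary magic squares exist only for these orders, and it is consistent with the preceding discussion, where the first two entries $1$ and $5$ (the values at $m=1$ and $m=2$) of the fourth column were explicitly set aside; hence no separate boundary analysis is needed.
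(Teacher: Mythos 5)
Your proposal is correct and follows essentially the same route as the paper's primary proof, which simply invokes the single-main-block explicit formula and verifies the closed form $\tfrac{m(m^2+1)}{2}$ (the paper cites (\ref{ndva}) there, but since $n=1$ the relevant formula is indeed (\ref{sn=1}), as you use). The paper additionally supplies a second, bijective proof via the decomposition $\tfrac{m(m^2+1)}{2}={m+1\choose 2}+m{m\choose 2}$, which you do not attempt, but your computational argument is complete on its own.
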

\begin{proof} The proof follows from (\ref{ndva}).
We again  add  a short bijective proof. Let  $X_1=\{x_1,x_2,x_3\}$
be the main block of $X$, and let $Y=\{y_1,y_2,\ldots,y_m\}$ be
the additional block.  We have \[\frac{m(m^2+1)}{2}={m+1\choose
2}+m{m\choose 2}.\] Consider the following two sets:
$A=\{a_1,a_2,\ldots,a_{m+1}\}$ and $B=\{b_1,b_2,\ldots,b_m\}.$ Let
$C$ be the union of the set of   $2$-subsets of $A$ and
$\{iB_2\vert i\in\{1,2,\ldots,m\},$  where $B_2$ runs over all
$2$-subsets of $B$. We  define a bijection between sets $X$ and
$C$ in the following way:
 \[\begin{array}{cc}1.&\{x_1,x_2,x_3,y_i\}\leftrightarrow \{a_i,a_{m+1}\},\\
 2.&\{x_1,x_2,y_i,y_j\}\leftrightarrow \{a_{i},a_j\},\\
 3.&\{x_2,x_3,y_i,y_j\}\leftrightarrow i\{b_{i},b_j\},\\
 4.&\{x_1,x_3,y_i,y_j\}\leftrightarrow j\{b_{i},b_j\},\\
 5.&\{x_1,y_i,y_j,y_k\}\leftrightarrow i\{b_{j},b_k\},\\
6.&\{x_2,y_i,y_j,y_k\}\leftrightarrow
j\{b_{i},b_k\},\\
7.&\{x_3,y_i,y_j,y_k\}\leftrightarrow k\{b_{i},b_j\}.
\end{array}\] \end{proof}

Take  $Q=\{2,q\}.$  In this case, formula (\ref{ndva}) takes the
following form:
\[{m,2\choose
2,Q}=\frac{q^3}{3}+\bigg(m-\frac 12\bigg)q^2+\bigg(m^2-m+\frac
16\bigg)q.\] This easily implies that
\begin{equation}\label{tspn}{m,2\choose 2,Q}=m^2+(m+1)^2+\cdots+(m+q-1)^2.\end{equation}
\begin{proposition} The number ${m,2\choose 2,Q},$ where  $Q=\{2,q\},$ counts the
\textbf{truncated square pyramidal numbers} {A050409}.
  \end{proposition}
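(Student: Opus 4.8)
The plan is to reduce everything to the identity (\ref{tspn}) already displayed just before the statement, after which the proposition becomes a matter of recognizing the right-hand side as a frustum count. Since the preceding paragraph derives, from (\ref{ndva}) specialized to $Q=\{2,q\}$, the equality
\[{m,2\choose 2,Q}=m^2+(m+1)^2+\cdots+(m+q-1)^2,\]
I would take this as the starting point rather than reprove it, and spend the proof interpreting the sum of $q$ consecutive squares on the right.

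Next I would recall the geometric meaning of the square pyramidal numbers $P_N=1^2+2^2+\cdots+N^2$, which count the unit cubes in a stepped square pyramid whose $i$th layer from the top is an $i\times i$ square. Removing the top $m-1$ layers of such a pyramid of base side $m+q-1$ leaves a frustum, i.e.\ a \emph{truncated} square pyramid, consisting of the $q$ layers of sides $m,m+1,\ldots,m+q-1$. The number of cubes in this frustum is exactly
\[P_{m+q-1}-P_{m-1}=\sum_{i=m}^{m+q-1}i^2=m^2+(m+1)^2+\cdots+(m+q-1)^2,\]
which by (\ref{tspn}) equals ${m,2\choose 2,Q}$. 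This identifies our quantity with a truncated square pyramidal number; in the spirit of the rest of the paper one could further exhibit the $4$-insets of $X$ as a concrete combinatorial realization of the cubes of the frustum, but the formula already settles the count.

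The only step requiring genuine care is matching the two free parameters $m$ and $q$ to the single-index enumeration used by OEIS A050409, whose entries list the truncated square pyramidal numbers in a fixed order of the pair (truncation height, frustum depth). I would therefore verify that the substitution \textbf{base side} $=m+q-1$ and \textbf{number of retained layers} $=q$ reproduces the tabulated values and lands on the correct diagonal of that two-parameter array. This is a bookkeeping check rather than a mathematical obstacle: once the indexing convention is pinned down, the identification is immediate from (\ref{tspn}), and the proof can be stated as briefly as the analogous earlier propositions.
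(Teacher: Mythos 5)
Your proposal is correct and matches the paper's treatment: the paper also derives the identity ${m,2\choose 2,Q}=m^2+(m+1)^2+\cdots+(m+q-1)^2$ from (\ref{ndva}) specialized to $Q=\{2,q\}$ and then states the proposition with no further argument, the sum of $q$ consecutive squares being precisely the definition of a truncated square pyramidal number. Your added frustum interpretation and the OEIS indexing check are harmless elaborations of the same route.
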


There is a relationship of our function with coefficients of the
Chebyshev polynomials of the second kind, which immediately
follows from (\ref{pr3}).
\begin{proposition}
 Let $c(n,k)$ denote the coefficient of $x^k$ of the \textbf{Chebysehev polynomial} $U_n(x)$
 {A008312}.  Then,
    \[c(n,k)=(-1)^{\frac{n-k}{2}}{0,\frac{n+k}{2}\choose \frac{n-k}{2},2},\] if
     $n$ and $k$ are of the same parity, otherwise $c(n,k)=0.$
\end{proposition}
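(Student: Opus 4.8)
The plan is to start from the standard closed form for the Chebyshev polynomials of the second kind, read off the coefficient of $x^k$, and then translate the resulting binomial expression into our inset function via (\ref{pr3}). Recall that
\[U_n(x)=\sum_{j=0}^{\lfloor n/2\rfloor}(-1)^j\binom{n-j}{j}(2x)^{n-2j}.\]
Since every exponent $n-2j$ has the same parity as $n$, no power $x^k$ with $k\not\equiv n\pmod 2$ can occur, which immediately gives $c(n,k)=0$ in the opposite-parity case. This disposes of one of the two cases at once.

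When $n$ and $k$ share the same parity, I would isolate the single term contributing to $x^k$ by setting $n-2j=k$, i.e.\ $j=\frac{n-k}{2}$, a nonnegative integer in the relevant range $0\le k\le n$. Substituting $j=\frac{n-k}{2}$, so that $n-j=\frac{n+k}{2}$ and $2^{n-2j}=2^{k}$, the coefficient becomes
\[c(n,k)=(-1)^{\frac{n-k}{2}}\binom{\frac{n+k}{2}}{\frac{n-k}{2}}\,2^{k}.\]

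It then remains to recognize the right-hand side as the inset function. Writing $N=\frac{n+k}{2}$ and $K=\frac{n-k}{2}$, formula (\ref{pr3}) gives ${0,N\choose K,2}=2^{N-K}\binom{N}{K}$, and since $N-K=k$ this equals exactly $2^{k}\binom{\frac{n+k}{2}}{\frac{n-k}{2}}$. Hence $c(n,k)=(-1)^{\frac{n-k}{2}}{0,\frac{n+k}{2}\choose \frac{n-k}{2},2}$, as claimed.

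Because every step is an explicit algebraic identification, there is no genuine obstacle here; the only points requiring care are the bookkeeping of parity (to justify the two cases) and the verification that the exponent matching $N-K=k$ lines up the factor $2^{N-K}$ coming from (\ref{pr3}) with the factor $2^{k}$ coming from $(2x)^{n-2j}$. If one preferred not to cite the closed form for $U_n$, an alternative route would be to establish it first by induction from the recurrence $U_{n+1}(x)=2xU_n(x)-U_{n-1}(x)$, but invoking the standard formula is by far the most direct path to the stated identity.
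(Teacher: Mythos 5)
Your proposal is correct and follows the same route the paper intends: the paper gives no written proof, merely asserting that the proposition ``immediately follows from (\ref{pr3})'', and your argument supplies exactly the omitted details — reading off $c(n,k)=(-1)^{\frac{n-k}{2}}2^{k}\binom{(n+k)/2}{(n-k)/2}$ from the closed form of $U_n$ and matching it with $2^{N-K}\binom{N}{K}$ where $N-K=k$. The parity bookkeeping and the exponent check are both handled correctly.
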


\begin{rema} In Janji\'c paper \cite{MJ}, the preceding connection is used to define a generalization
 of the Chebyshev polynomials.
\end{rema}

We now establish a connection of our function to the
\textbf{Catalan numbers} {A000108}. Using (\ref{sn=1}), we
obtain
    ${2n,\;1\choose n,\;2}={2n+2\choose n+1}-{2n\choose n+1}=\frac{3n+2}{n+1}{2n\choose n}.$
Hence,
\begin{proposition} If $C_n$ is the $n$th  Catalan number, then
\[C_n=\frac{1}{3n+2}{2n,\;1\choose n,\;2}.\]
\end{proposition}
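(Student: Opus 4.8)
The plan is to start from the closed form for $\binom{m,1}{k,q}$ already established in equation (\ref{sn=1}), namely $\binom{m,1}{k,q}=\binom{q+m}{k+1}-\binom{m}{k+1}$, and specialize it to the values $m=2n$, $k=n$, $q=2$. This immediately gives
\[
{2n,1\choose n,2}={2n+2\choose n+1}-{2n\choose n+1}.
\]
The goal is then to show that dividing this quantity by $3n+2$ recovers the $n$th Catalan number $C_n=\frac{1}{n+1}\binom{2n}{n}$. So the entire argument reduces to a routine binomial simplification, with no combinatorial construction required.

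First I would rewrite $\binom{2n+2}{n+1}$ in terms of $\binom{2n}{n}$. Using the standard identity $\binom{2n+2}{n+1}=\frac{(2n+2)(2n+1)}{(n+1)^2}\binom{2n}{n}=\frac{2(2n+1)}{n+1}\binom{2n}{n}$, and $\binom{2n}{n+1}=\frac{n}{n+1}\binom{2n}{n}$, I would subtract the second from the first. Factoring out $\frac{1}{n+1}\binom{2n}{n}$ leaves the scalar $2(2n+1)-n=3n+2$, so that
\[
{2n,1\choose n,2}=\frac{3n+2}{n+1}{2n\choose n}.
\]
This is exactly the expression quoted in the paragraph preceding the proposition. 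Dividing through by $3n+2$ then yields $\frac{1}{3n+2}\binom{2n,1}{n,2}=\frac{1}{n+1}\binom{2n}{n}=C_n$, which is the claimed formula.

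There is no genuine obstacle here: the result is a direct corollary of the $n=1$ explicit formula, and the only work is the elementary manipulation of two binomial coefficients into a common factor $\frac{1}{n+1}\binom{2n}{n}$. The one place to stay careful is the bookkeeping of the $(n+1)$ denominators when rewriting $\binom{2n+2}{n+1}$ and $\binom{2n}{n+1}$, since a misplaced factor there would spoil the cancellation to $3n+2$; verifying the identity at a small value such as $n=1$ (where $\binom{2,1}{1,2}=\binom{4}{2}-\binom{2}{2}=6-1=5$ and $\frac{5}{5}=1=C_1$) is a quick sanity check that the constants are correct.
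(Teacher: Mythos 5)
Your argument is correct, and it is essentially the computation the paper itself performs in the sentence immediately preceding the proposition: specialize (\ref{sn=1}) to $m=2n$, $k=n$, $q=2$ and reduce ${2n+2\choose n+1}-{2n\choose n+1}$ to $\frac{3n+2}{n+1}{2n\choose n}$. The only difference is that the paper's formal proof block does not invoke (\ref{sn=1}) at all; instead it enumerates the $(n+1)$-insets of a set with main block $\{x_1,x_2\}$ and additional block of size $2n$ by type, obtaining ${2n\choose n-1}+2{2n\choose n}$, which equals your expression since ${2n\choose n-1}={2n\choose n+1}$ and ${2n+2\choose n+1}={2n\choose n-1}+2{2n\choose n}+{2n\choose n+1}$ by Pascal's rule. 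The two routes are algebraically interchangeable; the paper's version fits its stated program of giving inset-based proofs, while yours is a clean corollary of the already-proved explicit formula and requires slightly less bookkeeping. Your simplification of the two binomial coefficients and the $n=1$ sanity check are both accurate.
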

\begin{proof} Let $X=\{x_1,x_2\}$ be the main, and $Y=\{y_1,y_2,\ldots,y_{2n}\}$ be the additional block of $X.$
In the next table we write different types of $(n+1)$-insets of $X$ and its numbers.
\[\begin{array}{cc}\text{$(n+1)$-insets}&\text{its number}\\
\{x_{1},x_{2},y_{i_1},y_{i_2},\ldots,y_{i_{n-1}}\},&{2n\choose n-1},\\
\{x_{1},y_{i_1},y_{i_2},\ldots,y_{i_{n-1},y_{i_n}}\},&{2n\choose n},\\
\{x_{2},y_{i_1},y_{i_2},\ldots,y_{i_{n-1},y_{i_n}}\},&{2n\choose n}.
\end{array}\]
We thus have ${2n\choose n-1}+2{2n\choose n}=\frac{3n+2}{n+1}{2n\choose n}$ $(n+1)$-insets.

\end{proof}

\begin{proposition}
If $F_q$ is the Fibonacci number, and $Q=\{F_q,F_{q+1}\},$  then
\[{1,2\choose 1,Q}={q+2\choose 3}_F,\]
where ${q+2\choose 3}_F$ is the \textbf{Fibonomial coefficient}
{A001655}.
\end{proposition}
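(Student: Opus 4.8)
The plan is to read this off directly from formula~(\ref{2q}), which already evaluates the left-hand side in closed form. Setting $q_1=F_q$ and $q_2=F_{q+1}$ in~(\ref{2q}) gives
\[
{1,2\choose 1,Q}=\frac{F_q\,F_{q+1}\,(F_q+F_{q+1})}{2}.
\]
The first step is then to invoke the Fibonacci recurrence $F_q+F_{q+1}=F_{q+2}$ to rewrite the sum in the last factor, yielding
\[
{1,2\choose 1,Q}=\frac{F_q\,F_{q+1}\,F_{q+2}}{2}.
\]

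The second step is to recall the definition of the Fibonomial coefficient,
\[
{n\choose k}_F=\frac{F_n\,F_{n-1}\cdots F_{n-k+1}}{F_1\,F_2\cdots F_k},
\]
and specialize to $n=q+2$, $k=3$. Since $F_1 F_2 F_3=1\cdot 1\cdot 2=2$, the denominator is exactly $2$, so
\[
{q+2\choose 3}_F=\frac{F_{q+2}\,F_{q+1}\,F_q}{2}.
\]
Comparing the two displayed expressions shows they are identical, which establishes the claimed equality. One could alternatively give an inset-theoretic proof along the lines of the earlier propositions, but since~(\ref{2q}) was proved by such a bijective argument already, the cleanest route is simply to substitute into it.

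There is no real obstacle here: the identity reduces to the single arithmetic fact that the denominator $F_1 F_2 F_3$ of the third Fibonomial coefficient equals the $2$ appearing in~(\ref{2q}), together with the defining recurrence of the Fibonacci sequence. The only point requiring mild care is to state the normalization of the Fibonomial coefficient consistently with sequence {A001655} (i.e.\ with $F_1=F_2=1$), so that the product $F_1F_2F_3=2$ matches the factor in~(\ref{2q}); once this is fixed the result is immediate.
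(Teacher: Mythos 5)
Your proof is correct and follows exactly the route the paper intends: the paper's proof consists of the single sentence that the formula is an easy consequence of~(\ref{2q}), and you have simply carried out that substitution, applied the Fibonacci recurrence, and verified the normalization $F_1F_2F_3=2$ of the Fibonomial coefficient. No discrepancy with the paper's argument.
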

\begin{proof} The formula is an easy consequence of (\ref{2q}).
\end{proof}
Finally, we connect our function with Dalannoy and Sulanke
numbers.

The  \textbf{Delannoy number}  $D(m,n)$ {A008288} is
defined as the number of lattice paths from $(0,0)$ to $(m,n),$
using steps $(1,0),(0,1)$ and $(1,1).$

\begin{proposition} We have
\begin{equation}\label{dav}D(m,n)={m,n\choose n,2}.
\end{equation} \end{proposition}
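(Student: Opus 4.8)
The plan is to establish a bijection between the set of Delannoy paths counted by $D(m,n)$ and the set of $(2n)$-insets counted by ${m,n\choose n,2}$, so that the identity becomes a direct combinatorial correspondence rather than an algebraic verification. For the inset side, recall that ${m,n\choose n,2}$ counts $(2n)$-subsets of a set $X$ built from $n$ main blocks $X_1,\dots,X_n$, each of size $2$, together with an additional block $Y$ of size $m$, subject to the constraint that each main block is met. First I would analyze what such an inset looks like: since there are $n$ main blocks of size $2$ and we must intersect each one, each main block contributes either exactly one element (a ``single'') or exactly two elements (a ``double''). If $j$ of the blocks are doubles and the remaining $n-j$ are singles, then the main blocks supply $2j+(n-j)=n+j$ elements, and the remaining $(2n)-(n+j)=n-j$ elements come from the additional block $Y$.

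The core of the bijection is to read a lattice path off this data. Each main block is a ``column'' of the path and will be traversed left to right, $i=1,\dots,n$. A single-element choice in block $X_i$ will correspond to a diagonal step $(1,1)$, while a double in block $X_i$ will correspond to a diagonal step accompanied by the machinery that tracks the $Y$-elements. More precisely, I would set up the correspondence so that the $j$ double blocks are exactly the blocks where the path takes no extra upward motion, and the $n-j$ single blocks each release one of the $n-j$ chosen $Y$-elements as an additional up-step $(0,1)$; the $(1,0)$ steps are then forced by the remaining structure. The cleanest route is to count both sides by the same sum over $j$: on the inset side, choosing which $j$ blocks are doubles gives ${n\choose j}$, and choosing which elements of $Y$ are used gives ${m\choose n-j}$, yielding $\sum_j {n\choose j}{m\choose n-j}$. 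I would then match this against the standard formula $D(m,n)=\sum_j {n\choose j}{m+n-j\choose n}$ or, more directly, against the known diagonal-step expansion $D(m,n)=\sum_{j}{n\choose j}{m\choose j}2^{?}$; the task is to pick the summation identity for $D(m,n)$ that matches the inset count term by term.

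Alternatively, and probably more in keeping with the paper's style, I would derive the result purely algebraically from the explicit formula (\ref{f1}) with the substitution $q=2$, $k=n$, giving
\[
{m,n\choose n,2}=\sum_{i=0}^{n}(-1)^i{n\choose i}{2n+m-2i\choose 2n},
\]
and then show this equals the Delannoy number. The plan here is to recognize the right-hand side as a known closed form for $D(m,n)$, either by comparing with the generating function $\sum_{m,n}D(m,n)x^m y^n = (1-x-y-xy)^{-1}$ or by transforming the alternating sum via a Vandermonde-type identity into the symmetric Delannoy form $D(m,n)=\sum_{j\ge0}{m\choose j}{n\choose j}2^{j}$.

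The main obstacle will be matching the two expressions cleanly. The inset count most naturally produces the form $\sum_j {n\choose j}{m\choose n-j}$ (blocks that are doubles versus $Y$-elements used), whereas the textbook Delannoy formula is the symmetric $\sum_j {m\choose j}{n\choose j}2^j$ involving a power of two. Reconciling these two — showing that the alternating binomial sum from (\ref{f1}), the ``doubles versus singles'' sum, and the ``$2^j$'' Delannoy sum all agree — is the delicate step, and it is exactly where a careful application of the Vandermonde convolution (already derived in this paper) or a small generating-function computation is required. Once that identity is pinned down, the proposition follows immediately.
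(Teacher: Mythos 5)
Your plan contains a concrete counting error at its center, and the step that would close the argument is never actually carried out. When you count the $(2n)$-insets with $j$ double blocks, you take ${n\choose j}$ for the choice of which blocks are doubles and ${m\choose n-j}$ for the choice of $Y$-elements, but you omit the factor $2^{n-j}$ for choosing \emph{which} of the two elements is taken from each of the $n-j$ single blocks. The correct count is $\sum_{j}{n\choose j}2^{n-j}{m\choose n-j}=\sum_{i}{n\choose i}{m\choose i}2^{i}$, which is exactly the standard symmetric formula for $D(m,n)$ that you yourself quote later. So the ``delicate reconciliation'' you flag as the main obstacle is an artifact of the missing $2^{n-j}$: once it is restored there is nothing left to reconcile, and the proposition follows from the known closed form for $D(m,n)$ (or from a bijection sending, in block $X_i$, a double to a $(1,0)$-type datum and a single to a signed choice, with the $Y$-elements recording positions as in the paper's crystal-ball argument). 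As written, however, neither your bijective sketch nor your algebraic alternative via formula (\ref{f1}) is completed: in both cases you defer precisely the identity that constitutes the proof.

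For comparison, the paper takes a different and entirely self-contained route: it checks the boundary values ${0,n\choose n,2}={m,0\choose 0,2}=1$ and then combines the recurrences (\ref{rek2}) and (\ref{krk}) to show that ${m,n\choose n,2}={m-1,n\choose n,2}+{m-1,n-1\choose n-1,2}+{m,n-1\choose n-1,2}$, i.e.\ that the insets satisfy the defining Delannoy recurrence. That argument needs no external closed form for $D(m,n)$, whereas your approach, even after the correction, rests on importing the identity $D(m,n)=\sum_{i}{m\choose i}{n\choose i}2^{i}$ from outside the paper.
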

\begin{proof} We
obviously have \[{0,n\choose n,2}={m,0\choose 0,2}=1.\]
Furthermore, for $m,n\not=0,$ using (\ref{rek2}), we obtain
\[{m,n\choose n,2}={m-1,n\choose n,2}+{m-1,n\choose n-1,2}.\] Applying
(\ref{krk}), we have \[{m-1,n\choose n-1,2}={m-1,n-1\choose
n-1,2}+{m,n-1\choose n-1,2}.\] It follows that \[{m,n\choose
n,2}={m-1,n\choose n,2}+{m-1,n-1\choose n-1,2}+{m,n-1\choose
n-1,2}.\] Hence, the numbers ${m,n\choose n,2}$ satisfy the same
recurrence relation as do the Dalannoy numbers. \end{proof}
\begin{rema} In his paper \cite{SU}, Sulanke gave the collection of 29 configurations  counted by the central Dallanoy numbers.
\end{rema}
The \textbf{Sulanke numbers}   $s_{n,k},\;(n,k\geq 0)$
{A064861} are defined in the following way:
\[s_{0,0}=1,\;\;s_{n,k}=0,\; \mbox{ if $n<0$ or $k<0$},\]
and
\[s_{n,k}=\begin{cases}s_{n,k-1}+s_{n-1,k}&\text{ if $n+k$ is even;}\\
s_{n,k-1}+2s_{n-1,k}&\text{ if $n+k$ is odd.}
\end{cases}\]

\begin{proposition} The following equations are true:
\begin{equation}\label{sun}
s_{n,k}=\begin{cases}{\frac{n+k}{2},\frac{n+k}{2}\choose k,\;2},&\text{if $n+k$ is even;}\\
{\frac{n+k-1}{2},\frac{n+k+1}{2}\choose k,\;2},&\text{if $n+k$ is
odd.}
\end{cases}
\end{equation}
\end{proposition}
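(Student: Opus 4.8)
The plan is to prove (\ref{sun}) by strong induction on $n+k$, showing that the right-hand side satisfies the same initial condition and the same two-case recurrence as the Sulanke numbers. Write $t_{n,k}$ for the right-hand side of (\ref{sun}), and extend it by $t_{n,k}=0$ whenever $n<0$ or $k<0$. For the base case $n+k=0$ we have $t_{0,0}={0,0\choose 0,2}={0\choose 0}=1=s_{0,0}$, using that with no main blocks the function reduces to an ordinary binomial coefficient. The inductive step fixes $(n,k)$ with $n+k\ge 1$ and splits according to the parity of $n+k$; the key observation is that decreasing either $n$ or $k$ by one flips this parity, so the two summands in Sulanke's recurrence always lie in the opposite parity branch of (\ref{sun}).

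Suppose first that $n+k$ is even and put $p=\frac{n+k}{2}$, so $t_{n,k}={p,p\choose k,2}$. Then $t_{n,k-1}$ and $t_{n-1,k}$ have odd index sum and translate into ${p-1,p\choose k-1,2}$ and ${p-1,p\choose k,2}$ respectively. Hence the even branch $t_{n,k}=t_{n,k-1}+t_{n-1,k}$ becomes exactly ${p,p\choose k,2}={p-1,p\choose k,2}+{p-1,p\choose k-1,2}$, which is the Pascal-type recurrence (\ref{rek2}) with $m=p-1$, with $p$ main blocks, and $q=2$ (note $p\ge 1$ here since $n+k\ge 2$).

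Suppose instead that $n+k$ is odd and put $p=\frac{n+k-1}{2}$, so that $t_{n,k}={p,p+1\choose k,2}$, while $t_{n,k-1}$ and $t_{n-1,k}$ have even index sum and become ${p,p\choose k-1,2}$ and ${p,p\choose k,2}$. Now the odd branch $t_{n,k}=t_{n,k-1}+2t_{n-1,k}$ reads ${p,p+1\choose k,2}=2{p,p\choose k,2}+{p,p\choose k-1,2}$, which is precisely (\ref{r3}) applied to a single block $q_j=2$, since ${2\choose 1}=2$ and ${2\choose 2}=1$. It is a pleasant feature that the asymmetry of Sulanke's recurrence — the factor $2$ that appears only when $n+k$ is odd — is explained by the binomial coefficient ${2\choose 1}$ arising from peeling off one two-element main block.

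The only delicate point is the boundary. When $k=0$ or $n=0$, one of the terms $t_{n,k-1}$, $t_{n-1,k}$ carries a negative index and must equal $0$ to agree with Sulanke's convention. Here I would argue that the corresponding counting function vanishes on its own: the translated terms ${p-1,p\choose -1,2}$ and ${p-1,p\choose k,2}$ (even case) and ${p,p\choose -1,2}$ and ${p,p\choose k,2}$ (odd case) each ask either for a negative number of extra elements or for an inset larger than the whole ground set, and so are $0$ by a direct cardinality count. Consequently the identities (\ref{rek2}) and (\ref{r3}) remain valid with these vanishing terms, and the induction closes. The main obstacle is therefore not conceptual but bookkeeping: keeping the floor/ceiling shifts of $\frac{n+k}{2}$ straight under the two substitutions, and checking that the zeros produced by the counting function coincide exactly with the zeros built into the Sulanke recurrence.
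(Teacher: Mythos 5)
Your proof is correct and follows essentially the same route as the paper: both arguments show that the right-hand side of (\ref{sun}) satisfies the Sulanke recurrence, using (\ref{rek2}) for the even branch and (\ref{r3}) with a two-element block for the odd branch. You merely make explicit the induction on $n+k$, the base case, and the vanishing of the boundary terms, all of which the paper leaves implicit.
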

\begin{proof}
 According to (\ref{rek2}), for even $n+k,$ we have
 \[{\frac{n+k}{2},\frac{n+k}{2}\choose k,\;2}=
 {\frac{n+k-2}{2},\frac{n+k}{2}\choose k-1,\;2}+
 {\frac{n+k-2}{2},\frac{n+k}{2}\choose k,\;2}.\]

For odd $n+k,$ using (\ref{r3}), we obtain
\[{\frac{n+k-1}{2},\frac{n+k+1}{2}\choose k,\;2}=
 {\frac{n+k-1}{2},\frac{n+k-1}{2}\choose k-1,\;2}
 +2{\frac{n+k-1}{2},\frac{n+k-1}{2}\choose k,\;2}.\]
We see that the numbers on the right side of (\ref{sun}) satisfy
the same recurrence as do the Sulanke numbers.
\end{proof}
Equation (\ref{q=2}) implies the following explicit formulae for
the Sulanke numbers:
\[s_{n,k}=\sum_{i=0}^{\frac{n+k}{2}}2^{\frac{n-k+2i}{2}}{\frac{n+k}{2}\choose i}
{\frac{n+k}{2}\choose k-i},\] if $n+k$ is even, and
\[s_{n,k}=\sum_{i=0}^{\frac{n+k-1}{2}}2^{\frac{n+1-k+2i}{2}}{\frac{n+k-1}{2}\choose i}
{\frac{n+k+1}{2}\choose k-i},\] if $n+k$ is odd.
\begin{rema} Using the method of $Z$ transform, J. Velasco, in his paper \cite{JV},  derived similar formulae for Sulanke numbers.
\end{rema}

The following two results connect our function with the
\textbf{coordination sequences} and the \textbf{crystal ball
sequences} for  cubic lattices.

\begin{proposition}
\begin{enumerate} \item The number
${m,n\choose n,2}$ equals the number of  solutions of the
Diophantine inequality \begin{equation}\label{nej}\vert
x_1\vert+\vert x_2\vert+\cdots+\vert x_n\vert\leq m.\end{equation}
\item The number ${m-1,n\choose n-1,2}$ equals the number of
solution of the Diophantine equation
\begin{equation}\label{jed}\vert x_1\vert+\vert x_2\vert+\cdots+\vert x_n\vert=m.\end{equation}
\end{enumerate}
\end{proposition}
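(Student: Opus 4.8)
The plan is to prove part (1) by directly enumerating the integer solutions of the inequality, and then to deduce part (2) from part (1) using the Pascal-type recurrence (\ref{rek2}). For part (1), I would stratify the solutions $(x_1,\dots,x_n)\in\mathbb{Z}^n$ of $|x_1|+\cdots+|x_n|\le m$ by the number $j$ of nonzero coordinates. Choosing which coordinates are nonzero contributes ${n\choose j}$, assigning each a sign contributes $2^j$, and the absolute values are positive integers $a_1,\dots,a_j\ge 1$ with $a_1+\cdots+a_j\le m$. A short stars-and-bars computation (set $a_i=1+b_i$, then introduce a slack variable) shows there are exactly ${m\choose j}$ such tuples, so the total number of solutions is $\sum_{j\ge 0}2^j{n\choose j}{m\choose j}$. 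On the other hand, specializing the explicit formula (\ref{q=2}) to $k=n$ gives ${m,n\choose n,2}=\sum_{i\ge 0}2^i{m\choose i}{n\choose i}$, since ${n\choose n-i}={n\choose i}$. The two sums agree term by term, which proves part (1).

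In keeping with the bijective spirit of the paper, I would also record the underlying correspondence in the language of insets. In a $2n$-inset $S$ of $X$, let $B$ be the set of main blocks meeting $S$ in a single element; a size count ($2n=2(n-|B|)+|B|+c$) forces $|B|$ to equal the number $c$ of chosen elements of $Y$. One reads off a solution by setting $x_i=0$ when $X_i\subseteq S$, and, for $i\in B$, taking the sign of $x_i$ from which of the two elements of $X_i$ lies in $S$. The magnitudes are recovered from the chosen indices $1\le s_1<\cdots<s_c\le m$ of $Y$ via the successive-differences map $(s_1,\,s_2-s_1,\,\dots,\,s_c-s_{c-1})$, whose total is $s_c\le m$; this is a bijection between $c$-subsets of $[m]$ and positive compositions of length $c$ with sum at most $m$. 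Reversing each step recovers $S$, so the correspondence is a bijection, giving a second proof of part (1).

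For part (2), I would observe that a solution of $|x_1|+\cdots+|x_n|=m$ is precisely a solution of $|x_1|+\cdots+|x_n|\le m$ that is not a solution of $|x_1|+\cdots+|x_n|\le m-1$. Hence, by part (1), the number of such solutions is ${m,n\choose n,2}-{m-1,n\choose n,2}$. Applying the recurrence (\ref{rek2}) with $m$ and $k$ there replaced by $m-1$ and $n-1$ gives ${m,n\choose n,2}={m-1,n\choose n,2}+{m-1,n\choose n-1,2}$, so this difference equals ${m-1,n\choose n-1,2}$, which is the claim.

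The points needing care are bookkeeping rather than genuine obstacles: matching the summation ranges in the two expressions for part (1) (the surplus terms vanish because ${m\choose j}=0$ for $j>m$ and ${n\choose j}=0$ for $j>n$), and the degenerate case $j=0$ in part (2), where the empty sum equals $m$ only when $m=0$. The one step I expect to require genuine verification is that the successive-differences map is well defined and is inverse to partial summation; everything else reduces to a routine stars-and-bars count together with the already-established identities (\ref{q=2}) and (\ref{rek2}).
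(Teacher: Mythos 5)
Your proposal is correct. Your primary argument for part (1) is a genuinely different route from the paper's: you stratify the solutions by the number $j$ of nonzero coordinates, count each stratum as $2^j{n\choose j}{m\choose j}$ by stars and bars, and match the resulting sum against the specialization $k=n$ of the explicit formula (\ref{q=2}), i.e.\ ${m,n\choose n,2}=\sum_i 2^i{m\choose i}{n\choose i}$. The paper instead proves part (1) purely bijectively: it sends a solution $(a_1,\dots,a_n)$ to the $2n$-inset containing both elements of $X_i$ when $a_i=0$, one element (chosen by sign) when $a_i\neq 0$, and the elements of $Y$ indexed by the partial sums $|a_{i_1}|,|a_{i_1}|+|a_{i_2}|,\dots$; surjectivity is checked by recovering the magnitudes as successive differences. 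The secondary bijection you ``record in the language of insets'' is exactly this map read in the opposite direction, so that portion coincides with the paper. Part (2) is handled identically in both: the solution count of the equation is the difference of two inequality counts, which (\ref{rek2}) identifies with ${m-1,n\choose n-1,2}$. What your algebraic route buys is independence from the delicate bookkeeping of the bijection (it rests only on (\ref{q=2}), already established from (\ref{pr3}) and (\ref{prm})) and it makes the closed form $\sum_i 2^i{m\choose i}{n\choose i}$ for these Delannoy-type counts explicit; the paper's route keeps to its stated program of bijective proofs via insets. Both are sound.
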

\begin{proof}

      Each solution  $(a_1,a_2,\ldots,a_n)$ of (\ref{nej}) corresponds  to a
      $2n$-inset $T$
of $X$ as follows:

 If $a_i=0,$ then both elements of the main block $X_i$ are inserted in $T.$ If $a_i\not=0,$
and its sign is $+$, then the first element from $X_i$ is inserted
into $T.$ If the sign of $a_i$ is $-$, then the second
 element of $X_i$ is inserted into $T.$
 In this way, we insert elements from the main blocks into $T.$

 Assume that  $X_{i_1},X_{i_2},\ldots,X_{i_t},\;
1< i_1<i_2<\ldots<i_t,\;(1\leq t\leq n)$ are the main blocks from
which, up until now,  only one element is inserted into $T.$ This means
that $a_{i_1},a_{i_2},\ldots,a_{i_t}$ are all different from $0.$
Also, $|a_{i_1}|+|a_{i_2}|+\cdots+|a_{i_t}|\leq m.$ Now, we insert
elements
\[y_{|a_{i_1}|},y_{|a_{i_1}|+|a_{i_2}|},\ldots,y_{|a_{i_1}|+|a_{i_2}|+\cdots+|a_{i_t}|}\] from the additional
block $Y$ into $T.$ In this way, we obtain a $2n$-inset $T.$

Now, we have to prove that this correspondence is bijective.

Let $T$ be an arbitrary $2n$-inset of $X.$ If there are no elements
of $Y$ in $T,$ then $T$ is obtained by the trivial solutions of
 (\ref{nej}). Assume  that $T$ contains the
 subset $\{y_{i_1},y_{i_2},\ldots,y_{i_s}\},\;(1\leq i_1<i_2<\cdots<i_s\leq m)$ of $Y.$
   We also have $s\leq n,$ since a $2n$-inset of
  $X$ has at most $n$
  elements from the additional block $Y.$

Form the solution $(b_1,b_2,\ldots,b_n)$ of (\ref{nej}) in the
following way:  Since there are $s-n$ main blocks $X_t$ from which
both elements are in $T,$ we define $b_t=0.$ Let
$X_{u_1},X_{u_2},\ldots,X_{u_s}$ be the remaining main blocks.
 We define $|b_{u_1}|=i_1,$ and the sign of $b_{u_1}$ is $+,$
 if the first element of the main block $X_u$ is in $T,$ and the sign $-$
 otherwise. We next define $|b_{u_t}|=i_{u_t}-i_{u_{t-1}},\;(t=2,\ldots,s),$ choosing the sign of
 $b_{u_t}$ in the same way as for $b_{u_1}.$ It follows that
 $|b_{u_1}|+\cdots+|b_{u_s}|=i_s\leq m.$ Hence,
 $(b_1,b_2,\ldots,b_n)$ is the solution of (\ref{nej}), which in
 the preceding correspondence produces the inset $T.$
  This means that the correspondence is
  surjective.

It is clear that no two different solutions may produce the same
inset, which means that our correspondence is injective. This
proves (\ref{nej}).

Using (\ref{rek2}), we have \[{m-1,n\choose n-1,2}={m,n\choose
n,2}-{m-1,n\choose n,2},\]
 which proves (\ref{jed}).
\end{proof}
\begin{rema}
Note that the number of solutions of equation (\ref{jed})
is the number of the coordination sequence, and the solution of
(\ref{nej}) are the numbers of the crystal ball sequence for the
cubic lattice $\mathbb Z^n.$  Also, the number of  solutions of
(\ref{nej}) equals the Dalannoy number $D(m,n).$
 \end{rema}
\begin{rema} The formulae (\ref{nej}) and (\ref{jed}) concern the
following sequences in OEIS \cite{slo}: {A001105},
{A035597}, {A035598}, {A035599},
{A035600}, {A035601}, {A035602},
{A035603}, {A035604}, {A035605},
{A035605}.
\end{rema}

Comparing the results of the preceding proposition, and the
formulae (16) and (17) in
 Conway and Sloane  \cite{slo1}, we obtain the following binomial identities:
\[\sum_{i=0}^m2^i{m\choose i}{n\choose i}=
\sum_{i=0}^n{n\choose i}{m-i+n\choose n},\]

\[\sum_{i=0}^{m-1}2^{i+1}{m-1\choose i}{n\choose i+1}=
\sum_{i=0}^n{n\choose i}{m-i+n-1\choose n-1}.\]

\section{Some Configurations Counted by ${m,n\choose k, Q}.$}
In this section, we describe a number of configurations counted by
our function.
The first result concerns the complete bipartite graphs.
\begin{proposition} The number $M(n,q-1,q)$ equals the number of spanning subgraphs of the complete bipartite graph
$K(q,n),$ having $n+q-1$ edges with no isolated vertices.
\end{proposition}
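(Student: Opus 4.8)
The plan is to prove the statement by exhibiting a direct bijection, via biadjacency matrices, between the spanning subgraphs described and the $(0,1)$-matrices enumerated by $M(n,q-1,q)$. First I would recall the definition of $M(n,k,q)$: it is the number of $(0,1)$-matrices of order $q\times n$ with exactly $n+k$ ones and having no zero rows and no zero columns. The immediate observation to record is the parameter match: taking $k=q-1$ yields matrices with $n+(q-1)=n+q-1$ ones, which is precisely the prescribed number of edges.

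Next I would fix labels for the bipartition of $K(q,n)$, say $\{r_1,\ldots,r_q\}$ for the part of size $q$ and $\{c_1,\ldots,c_n\}$ for the part of size $n$. A spanning subgraph is determined by its edge set, a subset of the $qn$ possible edges $r_ic_j$. To such a subgraph I associate its biadjacency matrix $A=(a_{ij})$ of order $q\times n$, where $a_{ij}=1$ exactly when the edge $r_ic_j$ is present and $a_{ij}=0$ otherwise. This assignment is an evident bijection between spanning subgraphs of $K(q,n)$ and all $(0,1)$-matrices of order $q\times n$, since a subgraph and its edge set determine each other, and the edge set and the matrix determine each other.

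Finally I would check that the three conditions translate correctly under this correspondence. The number of edges of the subgraph equals the number of ones of $A$; the degree of $r_i$ equals the $i$th row sum of $A$, and the degree of $c_j$ equals the $j$th column sum; hence a vertex is isolated if and only if the corresponding row or column of $A$ is identically zero. Therefore "no isolated vertices" corresponds to "no zero rows and no zero columns", and "$n+q-1$ edges" corresponds to "$n+q-1$ ones". Restricting the bijection above to the subgraphs satisfying these constraints gives exactly the matrices counted by $M(n,q-1,q)$, which proves the claim. Since the correspondence is the standard biadjacency identification, there is no real obstacle here; the only thing that genuinely needs to be verified is the parameter match $n+q-1=n+(q-1)$ flagged in the first step, so that the chosen $k=q-1$ reproduces the required edge count.
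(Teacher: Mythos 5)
Your proof is correct and follows essentially the same route as the paper: both identify a spanning subgraph with its biadjacency matrix, so that edges correspond to ones and isolated vertices correspond to zero rows or columns, with the parameter match $n+(q-1)=n+q-1$ giving the edge count. No substantive difference.
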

 \begin{proof} Let $A=(a_{ij})_{n\times n}$ be $(0,1)$-matrix
  which has $n+q-1$ ones, and has no zero rows or zero columns.
  This matrix corresponds to  a spanning subgraph $S=(V(S),E(S))$ of the complete bipartite graph
 $K(n,q)=(V=\{v_1,v_2,\ldots,v_n\} \cup \{u_1,u_2,\ldots,u_q\},E=\{v_iu_j : 1 \leq i \leq n,\; 1 \leq j \leq q\}
),$ having $n+q-1$ edges, in the following way:
\begin{enumerate} \item  $a(i,j)=1,\;(1 \leq i \leq n,\; 1 \leq j \leq q )$ if and only if $v_iu_j \in
E(S)$. \item  $a(i,j)=0,\;(1 \leq i \leq n,\; 1 \leq j \leq q),$
if and only if   $v_iu_j \notin E(S)$.
\end{enumerate}
Note that the matrix $A$ has
$n+q-1$ ones if and only if $|E(S)|=n+q-1,$
and that the matrix
$A$ has no zero rows or zero columns if and only if the subgraph  $S$ has no
isolated vertices.
 \end{proof}
\begin{rema} The function $M(n,q-1,q)$ produces the following sequences in \cite{slo}:
 {A001787}, {A084485}, {A084486}.
\end{rema}
\begin{proposition}
 The number ${n,\;1\choose n-1,n}$ equals the number
 of square submatrices of some $n$ by $n$ matrix {A030662}.
 \end{proposition}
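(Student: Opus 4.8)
The plan is to first evaluate the quantity in closed form, and then to match it against the count of square submatrices, ideally by a bijection in the spirit of the rest of the paper. First I would apply the single-block formula (\ref{sn=1}) with $m=n$, $k=n-1$, $q=n$, which gives
\[
{n,1\choose n-1,n}={2n\choose n}-{n\choose n}={2n\choose n}-1.
\]
On the other side, a square submatrix of an $n\times n$ matrix is determined by choosing a $k$-element set of rows together with a $k$-element set of columns, for some $k$ with $1\le k\le n$, so the number of square submatrices is $\sum_{k=1}^n{n\choose k}^2$. By the Vandermonde convolution $\sum_{k=0}^n{n\choose k}^2={2n\choose n}$, and discarding the $k=0$ term yields $\sum_{k=1}^n{n\choose k}^2={2n\choose n}-1$. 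Comparing the two expressions already proves the proposition.

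Next, to stay within the bijective framework used throughout, I would exhibit a direct correspondence in terms of insets. Let $X_1=\{x_1,\dots,x_n\}$ be the main block and $Y=\{y_1,\dots,y_n\}$ the additional block, so that $|X|=2n$ and we are counting $n$-insets, that is, $n$-subsets of $X$ meeting $X_1$. Suppose such an inset uses the elements indexed by a set $R\subseteq[n]$ from $X_1$ and the elements indexed by a set $J\subseteq[n]$ from $Y$. Since the inset has $n$ elements and meets $X_1$, we have $|R|=k\ge 1$ and $|J|=n-k$. I would associate to this inset the square submatrix with row set $R$ and column set $C=[n]\setminus J$, noting that $|C|=k=|R|$, so the selected submatrix is indeed square.

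It remains to check that this correspondence is a bijection. The map is reversible: from a square submatrix with equal-size row set $R$ and column set $C$, $|R|=|C|=k\ge1$, one recovers the inset by inserting the elements of $X_1$ indexed by $R$ and the elements of $Y$ indexed by $[n]\setminus C$, a subset of size $k+(n-k)=n$ which meets $X_1$ precisely because $k\ge1$. The main point to verify is that the intersecting condition on insets matches the nonemptiness $k\ge1$ of the row/column selection (a square submatrix has at least one row and column), and that complementation converts the $(n-k)$-subset $J$ coming from $Y$ into the $k$-subset $C$ of columns; this is the only slightly delicate bookkeeping, and once it is checked the two sides agree blockwise, contributing ${n\choose k}^2$ objects for each $k$ and hence the claimed total.
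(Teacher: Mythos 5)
Your bijection (rows indexed by the chosen elements of $X_1$, columns indexed by the complement of the chosen elements of $Y$) is exactly the correspondence the paper uses, and it is correct. The preliminary closed-form check via (\ref{sn=1}) and the identity $\sum_{k=1}^n{n\choose k}^2={2n\choose n}-1$ is a sound addition but not needed beyond what the bijection already establishes.
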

\begin{proof} Let $M$ be a square matrix of order $n.$
If $X_1=\{x_1,\ldots,x_n\}$ is the main block of $X,$ and
$Y=\{y_1,\ldots,y_n\}$ is the additional block, then each
$n$-inset of $X$ has the form
\[\{x_{i_1},x_{i_2},\ldots,x_{i_k},y_{j_{k+1}},\ldots,y_{j_n}\},\;(k\geq 1).\]
Every  such  inset corresponds to the  square
submatrix of $M,$ of which the indices of rows  are $i_1,i_2,\ldots,i_k,$
and indices of columns  belong to the set
$\{1,2,\ldots,n\}\setminus\{j_{k+1},\ldots,{j_n}\}.$\end{proof}

\begin{proposition} For $i\geq 0,$ the number ${n,\;1\choose n+i-1,\;n}$ equals the
number of lattice paths from $(0,0)$ to $(n,n),$ with steps
$E=(1,0)$ and $N=(0,1),$ which either touch or cross the line $x-y=i.$
\end{proposition}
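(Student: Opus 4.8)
We need to prove that $\binom{n,1}{n+i-1,n}$ counts lattice paths from $(0,0)$ to $(n,n)$ using unit East and North steps that touch or cross the line $x-y=i$. Let me first understand what the counting function gives us here.

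We have one main block $X_1 = \{x_1, \ldots, x_n\}$ with $n$ elements, and additional block $Y = \{y_1, \ldots, y_n\}$ with $m=n$ elements. We're counting $(n + (n+i-1)) = (2n+i-1)$-insets... wait, let me recompute. An $(n+k)$-inset has size $n+k$ where $n$ is the number of main blocks. Here $n_{\text{blocks}} = 1$ and $k = n+i-1$, so insets have size $1 + (n+i-1) = n+i$.

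So we're counting $(n+i)$-subsets of $X_1 \cup Y$ (total $2n$ elements) that intersect the main block $X_1$.

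Let me verify with the explicit formula. Using (\ref{sn=1}):
$$\binom{m,1}{k,q} = \binom{q+m}{k+1} - \binom{m}{k+1}.$$

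Here $m=n$, $k=n+i-1$, $q=n$:
$$\binom{n,1}{n+i-1,n} = \binom{2n}{n+i} - \binom{n}{n+i}.$$

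For $i \geq 0$ (and $i \geq 1$ making the second term zero, or $i=0$ giving $\binom{n}{n}=1$):
- For $i \geq 1$: $\binom{n}{n+i} = 0$, so the answer is $\binom{2n}{n+i}$.
- For $i = 0$: $\binom{2n}{n} - \binom{n}{n} = \binom{2n}{n} - 1$.

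**Checking the path count.** Total lattice paths from $(0,0)$ to $(n,n)$ = $\binom{2n}{n}$. Paths that touch or cross $x-y=i$:

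For $i \geq 1$: By reflection principle, paths touching/crossing $x-y=i$ (i.e., reaching where $x - y \geq i$ at some point) — these are paths touching the line $x-y=i$. The reflection count gives $\binom{2n}{n-i} = \binom{2n}{n+i}$. ✓

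For $i=0$: The line $x-y=0$ is the diagonal. Every path touches $(0,0)$ and $(n,n)$ which are on this line. So ALL $\binom{2n}{n}$ paths touch it. But the formula gives $\binom{2n}{n}-1$.

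Hmm, discrepancy at $i=0$. Perhaps "touch or cross" means touch at an interior point, or cross strictly. Let me reconsider — perhaps for $i=0$ they mean crossing (going strictly below), which would exclude... Actually this edge case needs care; I'll focus on the bijective/reflection approach and note the $i=0$ subtlety.

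---

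The plan is to establish the formula $\binom{n,1}{n+i-1,n} = \binom{2n}{n+i}$ via the explicit formula (\ref{sn=1}), and then show the right-hand side counts the lattice paths in question using the reflection principle. First I would invoke (\ref{sn=1}) with $m=n$, $k=n+i-1$, $q=n$ to obtain $\binom{n,1}{n+i-1,n} = \binom{2n}{n+i} - \binom{n}{n+i}$, noting that $\binom{n}{n+i}=0$ for $i \geq 1$ while the case $i=0$ requires separate attention.

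Next I would count the lattice paths directly. The total number of paths from $(0,0)$ to $(n,n)$ is $\binom{2n}{n}$, and a path that touches or crosses the line $x-y=i$ is one that reaches some point with $x-y \geq i$. For $i \geq 1$, I would apply the classical reflection principle: reflecting the portion of the path up to its first contact with the line $x-y=i$ sets up a bijection between such paths and all lattice paths from the reflected start $(0,0)' = (i,-i)$ to $(n,n)$, equivalently all paths from $(0,0)$ to $(n+i, n-i)$. These number $\binom{2n}{n+i}$, matching the counting function.

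The main obstacle I anticipate is the correct treatment of the $i=0$ boundary case and the precise meaning of ``touch or cross.'' Since the endpoints $(0,0)$ and $(n,n)$ lie on the diagonal $x-y=0$, literally every path touches that line, which would give $\binom{2n}{n}$ rather than the $\binom{2n}{n}-1$ predicted by the formula; so for $i=0$ the intended reading must be that the path touches the diagonal at some \emph{interior} lattice point, and I would verify that exactly one path (the staircase alternating the steps so as never to return to the diagonal strictly between the endpoints — more precisely, a path meeting the diagonal only at the endpoints) is excluded. Alternatively, I would present the cleaner bijective argument in terms of insets: an $(n+i)$-inset of $X_1 \cup Y$ intersecting $X_1$ is determined by choosing which elements of $X_1$ and $Y$ to include, and I would encode each such inset as a lattice path so that the touch/cross condition corresponds exactly to the inset constraint, thereby giving a direct bijection that sidesteps the reflection computation and handles all $i \geq 0$ uniformly.
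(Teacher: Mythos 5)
Your argument is essentially the paper's: both reduce the count to $\binom{2n}{n+i}$ by reflecting the path across the line $x-y=i$ up to its first contact, bijecting with all paths from $(i,-i)$ to $(n,n)$, and then match this against the explicit formula (\ref{sn=1}). Your flag on the $i=0$ case is a genuine catch the paper glosses over: the inset count is $\binom{2n}{n}-1$, while every path from $(0,0)$ to $(n,n)$ touches the diagonal at its endpoints (and no reasonable reinterpretation of ``touch or cross'' yields $\binom{2n}{n}-1$), so the proposition as stated really only holds for $i\geq 1$, consistent with the cited OEIS sequences, which begin at $\binom{2n}{n-1}$.
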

\begin{proof}
\end{proof}
We may write arbitrary lattice path from $(0,0)$ to $(n,n)$ in the form
$P=P_1P_2\ldots P_{2n},$ where each $P_i$ is either $E$ or $N.$ Assume
that $s$ is the least index such that the end of $P_s$ touches the line $x-y=i,$
 and let  $(r,r-i), \;(i\leq r\leq n)$ be the touching point. It follows that $s=2r-i.$

Consider the lattice  path $Q=Q_1Q_2\ldots Q_sP_{s+1}\ldots P_{2n},$ where $P_t$ and $Q_t$ are symmetric with respect to the line $y=x-i.$ This path connects $(-i,i)$ and $(n,n).$ Since every lattice path from $(i,-i)$ to $(n,n)$ must cross the line $y=x-i,$ conversely also holds.
We thus have a bijection between the number of considered lattice  paths and the number of all
 lattice paths from $(i,-i)$ to $(n,n).$ The last lattice paths are of the form
 $L_1L_2\ldots L_{2n},$ where $n-i$ $L$'s equal $E,$ and $n+i$ equal $N.$
Hence, its number is ${2n\choose n+i}$, and the proof follows from (\ref{ndva}).

Again, we add a short bijective proof. Let $X$ be a set which have one main block
 $X_1=\{x_1,x_2,\ldots,x_n\},$ and the additional block $Y=\{y_1,y_2,\ldots,y_n\}.$
We need to define a bijection between all lattice paths from $(i,-i)$ to $(n,n)$ and $(n+i)$-insets of of $X.$
Let $\{x_{i_1},\ldots,x_{i_u},y_{j_1},\ldots,y_{j_v},\}\;(u+v=n+i)$ be an $(n+i)$-inset of $X.$
Define the path  $L_1L_2\ldots L_{2n}$ in the following way.
\[L_p=\begin{cases}N&\text{if $p\in\{i_1,\ldots,i_u,j_1,\ldots,j_v\}$},\\
E&\text{otherwise}.\end{cases}\]
It is clear that this correspondence is bijective. 
\begin{rema} This proposition concerns the following sequences in OEIS \cite{slo}:
 {A001791}, {A002694}, {A004310} ,{A004311}
 {A004312}, {A004313}, {A004314}, {A004315}, {A004316}, {A004317}, {A004318}.
\end{rema}

We now give a combinatorial interpretation of the formula (\ref{tspn}).

Consider the square $Q,$  the vertices of which are  $(1,1),(1,m+q),(m+q,1),$ and $(m+q,m+q).$
Let $S$ be the set of squares, whose vertices are
$(u,v),(u+w,v),(u,v+w),(u+w,v+w),\;(1\leq w\leq q),$
and which are contained in $Q.$
\begin{proposition}
If $Q=\{2,q\},$ then the
number ${m,2\choose 2,Q}$   equals  $|S|.$
\end{proposition}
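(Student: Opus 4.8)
The plan is to compute $|S|$ directly by stratifying the squares according to their side length, and then to match the resulting expression against the already-proved formula (\ref{tspn}). Since $Q=\{2,q\}$, that formula gives ${m,2\choose 2,Q}=m^2+(m+1)^2+\cdots+(m+q-1)^2$, so it suffices to show that $|S|$ equals this same sum of $q$ consecutive squares.

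First I would group the squares of $S$ by their common side length $w$, which by definition ranges over $1\le w\le q$. A square of side $w$ is completely determined by its lower-left vertex $(u,v)$, its four vertices being $(u,v),(u+w,v),(u,v+w),(u+w,v+w)$. The requirement that this square lie inside the large square (whose lattice points have both coordinates in $\{1,2,\ldots,m+q\}$) is exactly $1\le u$, $1\le v$, $u+w\le m+q$, and $v+w\le m+q$. Thus $u$ and $v$ range independently over $\{1,2,\ldots,m+q-w\}$, so there are precisely $(m+q-w)^2$ squares of side $w$ in $S$.

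Summing over all admissible side lengths gives $|S|=\sum_{w=1}^{q}(m+q-w)^2$, and the substitution $j=q-w$ rewrites this as $\sum_{j=0}^{q-1}(m+j)^2=m^2+(m+1)^2+\cdots+(m+q-1)^2$. This is exactly the right-hand side of (\ref{tspn}), whence ${m,2\choose 2,Q}=|S|$, as claimed.

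There is no genuine obstacle here; the only care needed is in pinning down the range of the corner coordinates (so that the whole square, and not merely its lower-left vertex, sits inside the ambient square) and in checking that the reindexing lines the two sums up correctly. If a proof in the bijective style used elsewhere in the paper were wanted, one could try to biject the $4$-insets of a set $X$ with main blocks of sizes $2$ and $q$ and additional block of size $m$ onto the squares of $S$, encoding the side length and corner position in the inset; but reconciling the various inset types with these data is appreciably more intricate than the direct count, so I would keep the direct count as the main proof and relegate any bijection to a remark.
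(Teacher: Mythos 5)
Your proof is correct, but it takes a different route from the paper's. You stratify $S$ by side length $w$, count $(m+q-w)^2$ squares of each side length, and reindex to get $|S|=m^2+(m+1)^2+\cdots+(m+q-1)^2$, which you then match against the previously established identity (\ref{tspn}). The paper instead proves the proposition bijectively: it sets up main blocks $X_1=\{x_{11},x_{12}\}$, $X_2=\{x_{21},\ldots,x_{2q}\}$ and an additional block of size $m$, encodes the side length of a square as the minimal index $d$ with $x_{2d}$ in the $4$-inset, and then lists an eight-case correspondence sending each type of $4$-inset to the upper-right corner $(i,j)$ of a square. Your direct count is shorter and easier to verify, and it cleanly isolates the only delicate point (the range of the corner coordinates); the paper's bijection is more work to check but serves the paper's explicitly stated program of giving inset-based bijective proofs, and it explains combinatorially \emph{why} the insets decompose according to side length rather than merely confirming that the two totals agree. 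Since the section introducing this proposition announces it as ``a combinatorial interpretation of the formula (\ref{tspn}),'' your closing suggestion to supply such a bijection is exactly what the authors do; as a standalone proof of the stated equality, however, your argument is complete.
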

\begin{proof}
Let $X_1=\{x_{11},x_{12}\}$  and
$X_2=\{x_{21},x_{22},\ldots,x_{2q}\}$ be the main blocks of $X,$
and let $y=\{y_1,\ldots,y_m\}$ be the additional block.

We need to define a bijection between $4$-insets of $X$ and the set $S.$
      If $U$ is a $4$-inset of $X,$ then it must contain an element from $X_2.$ The length of the side of corresponding square will be the minimal $i,$ such that $x_{2i}\in U.$
      In the next correspondence, it will be denoted by $d.$
We now define a correspondence between $4$-insets and pairs $(i,j),$ which represent
the upper right corner of the square. Note that the indices of elements in insets are always taken in increasing order.
\[\begin{array}{cc}
1.&\{x_{11},x_{2d},x_{2i},x_{2j}\}\leftrightarrow
(i,j),\\
2.&\{x_{12},x_{2d},x_{2i},x_{2j}\}\leftrightarrow
(j,i),\\
 3.&\{x_{11},x_{12},x_{2d},x_{2i}\}\leftrightarrow
(i,i),\\
4.&\{x_{11},x_{12},x_{2d},y_{i}\}\leftrightarrow
(q+i,q+i),\\
5.&\{x_{11},x_{2d},x_{2i},y_{j}\}\leftrightarrow
(i,q+j),\\
6.&\{x_{12},x_{2d},x_{2i},y_{j}\}\leftrightarrow
(j+q,i),\\
7.&\{x_{11},x_{2d},y_i,y_j\}\leftrightarrow
(q+i,q+j),\\
8.&\{x_{12},x_{2d},y_i,y_j\}\leftrightarrow
(q+j,q+i).
\end{array}\]
It is easy to see that the correspondence is bijective.
 \end{proof}
\begin{proposition} Let $p_1<p_2<p_3$ be prime numbers. If we denote $s=p_1p_2p_3^2,$ then
 $n,2\choose 1,n$ equals the number of divisors of $s^{n-1}$
{A015237}.
\end{proposition}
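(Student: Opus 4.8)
The plan is to reduce the statement to the closed form for ${n,2\choose 1,n}$ that was established earlier and then to compute the number of divisors of $s^{n-1}$ directly from its prime factorization. A previous proposition already gives ${n,2\choose 1,n}=(2n-1)n^2$, so it suffices to show that the number of divisors of $s^{n-1}$ also equals $(2n-1)n^2$.

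First I would write out the prime factorization of $s^{n-1}$. Since $p_1,p_2,p_3$ are distinct primes and $s=p_1p_2p_3^2$, raising to the $(n-1)$st power gives
\[ s^{n-1}=p_1^{\,n-1}\,p_2^{\,n-1}\,p_3^{\,2(n-1)}. \]
Because the three primes are pairwise distinct, their exponents combine multiplicatively, so the standard divisor-counting formula applies: the number of divisors of a positive integer $N=p_1^{a_1}p_2^{a_2}p_3^{a_3}$ with distinct primes $p_i$ is $(a_1+1)(a_2+1)(a_3+1)$.

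Applying this with $a_1=a_2=n-1$ and $a_3=2(n-1)$ yields
\[ (n-1+1)(n-1+1)\bigl(2(n-1)+1\bigr)=n\cdot n\cdot(2n-1)=(2n-1)n^2. \]
Comparing this with the recalled formula ${n,2\choose 1,n}=(2n-1)n^2$ completes the proof.

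I do not expect any real obstacle, since the argument is a short direct computation. The only points that require a little care are that the three primes must be pairwise distinct, so that the exponents combine without interference, and that the square appearing in $s=p_1p_2p_3^2$ is exactly what produces the factor $2n-1$ (rather than $n$) in the divisor count. Everything else follows from the divisor-counting formula together with the previously derived closed form for ${n,2\choose 1,n}$.
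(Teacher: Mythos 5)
Your proof is correct, but it takes a genuinely different route from the paper. You reduce everything to the closed form ${n,2\choose 1,n}=(2n-1)n^2$ and to the standard divisor-counting formula $d(p_1^{a_1}p_2^{a_2}p_3^{a_3})=(a_1+1)(a_2+1)(a_3+1)$; this is a clean two-line verification, and the identification of the factor $2n-1$ with the squared prime is exactly the right observation. The paper instead gives a bijective proof in the spirit of its stated program: it takes $X$ with two main blocks of size $n$ and an additional block of size $n$, and exhibits an explicit bijection between the $3$-insets of $X$ and the exponent triples $(i,j,k)$ with $0\leq i,j\leq n-1$, $0\leq k\leq 2n-2$ that index the divisors of $s^{n-1}$. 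Your argument is shorter and arguably more transparent, but it leans on the earlier proposition asserting ${n,2\choose 1,n}=(2n-1)n^2$, which the paper states without proof; to be self-contained you should note that this closed form follows from formula (\ref{ndva}) with $m=n$, $k=1$, $q_1=q_2=n$, namely ${3n\choose 3}-2{2n\choose 3}+{n\choose 3}=(2n-1)n^2$. The paper's bijection buys a combinatorial explanation of \emph{why} the count matches (each inset encodes a divisor directly), which is the point of the paper's methodology, whereas your computation buys brevity.
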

\begin{proof} Let $X_{i}=\{x_{i1},x_{i2},\ldots,x_{in}\},\;(i=1,2)$ be the main blocks of $X$,
and let $Y=\{y_1,y_2,\ldots,y_n\}$ be the additional block. It is
enough to define a bijection between $3$-insets of $X,$ and
$3$-tuples $(i,j,k),$ such that $0\leq i,j\leq n-1,\;0\leq k\leq
2n-2.$ A bijection goes as follows:
\[\begin{array}{cc}
1.&\{x_{1i},x_{1j},x_{2k}\}\leftrightarrow
(i-1,j-1,k-1),\\
2.&\{x_{1k},x_{2j},x_{2i}\}\leftrightarrow (i-1,j-1,k-1),\\
3.&\{x_{1i},x_{2j},y_{k}\}\leftrightarrow
(i-1,j-1,n+k-2),\;(1<k).\\
4.&\{x_{1i},x_{2j},y_1\}\leftrightarrow (i-1,i-1,j-1).\end{array}\]
\end{proof}

\begin{proposition}
The number ${1,n\choose 1,2}$ equals the number of parts in all
compositions  of   $n+1$ {A001792}.
\end{proposition}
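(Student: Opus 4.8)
The plan is to reduce the statement to the single closed-form value $(n+2)2^{n-1}$ on both sides. First I would evaluate the left-hand side. Setting $m=k=1$ in (\ref{q=2}) gives
\[{1,n\choose 1,2}=2^{n-1}\sum_{i=0}^{1}2^{i}{1\choose i}{n\choose 1-i}=2^{n-1}\left({n\choose 1}+2{n\choose 0}\right)=(n+2)2^{n-1}.\]
So it suffices to show that the number of parts, taken over all compositions of $n+1$, is also $(n+2)2^{n-1}$.

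For the right-hand side I would count by number of parts. A composition of $n+1$ into exactly $j$ parts is determined by choosing $j-1$ of the $n$ gaps between the $n+1$ units, so there are ${n\choose j-1}$ of them, each contributing $j$ parts. Summing the contributions,
\[\sum_{j=1}^{n+1}j{n\choose j-1}=\sum_{i=0}^{n}(i+1){n\choose i}=n2^{n-1}+2^{n}=(n+2)2^{n-1},\]
using $\sum_i i{n\choose i}=n2^{n-1}$ and $\sum_i{n\choose i}=2^{n}$. Comparing the two displays proves the proposition.

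To stay in the bijective spirit of the paper, I would alternatively exhibit a bijection between the $(n+1)$-insets of $X$ and the marked compositions of $n+1$, that is, pairs $(C,P)$ with $C$ a composition of $n+1$ and $P$ a distinguished part of $C$. Writing $X_i=\{x_{i1},x_{i2}\}$ for the main blocks and $Y=\{y\}$, every $(n+1)$-inset meets each $X_i$ and has exactly one element to spare, so it falls into one of two classes: those containing $y$ (then each $X_i$ contributes a single element, giving $2^{n}$ insets), and those not containing $y$ (then exactly one block contributes both elements and the rest one each, giving $n2^{n-1}$ insets). I would associate the first class to compositions with the first part marked and the second class to compositions with a later part marked, reading the choice $x_{i1}$ versus $x_{i2}$ in each singly met block as whether the $i$th gap is cut, and using the doubled block to locate the marked part. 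The main obstacle is precisely this last step: pinning down an explicit, invertible rule that turns the position of the doubled block into the position of the distinguished part while remaining consistent with the cut pattern. The two class sizes $2^{n}$ and $n2^{n-1}$ match exactly the counts of first-marked and later-marked compositions, since the latter equals $(n+2)2^{n-1}-2^{n}=n2^{n-1}$; this is the consistency check the construction must pass.
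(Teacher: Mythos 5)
Your first argument is correct and complete: evaluating ${1,n\choose 1,2}=(n+2)2^{n-1}$ from (\ref{q=2}) and showing that the total number of parts over all compositions of $n+1$ is $\sum_{j=1}^{n+1}j{n\choose j-1}=(n+2)2^{n-1}$ settles the proposition. This is a genuinely different route from the paper's. The paper never passes to a single closed form; instead it grades the $(n+1)$-insets by the number $k$ of indices $i$ for which $x_{i1}$ is absent, shows that this class has exactly $(n-k+1){n\choose n-k}$ members, and observes that this is precisely the number of parts in all compositions of $n+1$ into $n-k+1$ parts --- so the identity is established class by class rather than in total. What the paper's version buys is a finer statement (a term-by-term match between inset types and compositions with a prescribed number of parts), which is closer to the bijective spirit announced in the abstract; what your version buys is brevity and independence from any structural analysis of the insets, at the cost of collapsing everything to the single number $(n+2)2^{n-1}$. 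Your supplementary bijective sketch is, as you acknowledge, not yet a proof --- the rule converting the doubled block into a distinguished part is left unspecified --- but nothing in the proposition depends on it, since the computational argument already stands on its own. If you want to complete that sketch, the paper's grading by $k$ is the natural scaffolding: the insets in your second class with a fixed set of $k$ doubled... rather, with a fixed set of blocks contributing only $x_{i2}$, line up with the compositions having the corresponding cut pattern, and the one remaining free choice (of $y$ or of an extra $x_{j2}$) is what selects the marked part.
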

\begin{proof} Let $X_i=\{x_{i1},x_{i2}\},\;(i=1,2,\ldots,n)$ be the main blocks of $X,$
and let $Y=\{y\}$ be the additional block. For a fixed
$k,\;(k=0,1,\ldots,n),$ we shall prove that $(n+1)$-insets of $X,$
in which exactly  $k$ elements of the form $x_{i1}$ are not chosen,
count the number of parts in all compositions of $n+1$ into
$n-k+1$ parts.
 Take $1\leq i_1<i_2<\ldots<i_k\leq n,$ and consider $(n+1)$-inset $U$ of $X$ not containing
 elements $x_{i_1,1},x_{i_2,1},\ldots,x_{i_k,1},$ but containing the remaining $n-k$
 elements of the forme $x_{i1}.$
 The remaining $k$ of $k+1$ elements of $U$ must be $x_{i_1,2},x_{i_2,2},\ldots,x_{i_k,2}.$
 For the remaining element, therefore, either $y$ or one of
 $x_{j2},\;(j\not=i_t,\;(t=1,\ldots,k))$ must be chosen.  For this, we have $(n-k+1)$ possibilities.
 Since $i_1,\ldots,i_k$ may be chosen in ${n\choose k}$ ways, we have
 $(n-k+1){n\choose n-k}$ insets containing   $(n+1)$ elements, but not containing
 exactly $k$ elements of the form $x_{i1}.$ On the other hand,
 the number of the compositions of $n+1$ with $n-k+1$ parts equals
 ${n\choose n-k}.$ Hence,
 $(n-k+1){n\choose n-k}$  equals the number of parts in all
 compositions of $n+1$ with $(n-k+1)$ parts. Since $k$ ranges from $0$ to $n,$
 the assertion follows.
  \end{proof}

We now  present two configurations counted by the number
${1,n-1\choose 1,\;3},\;(n>1)$ {A027471}.

 \begin{proposition}

  Given $n$ points on a straight line,
  the number ${1,n-1\choose 1,\;3}$ equals the number of coloring of $n-1$ points
   with three colors.
  \end{proposition}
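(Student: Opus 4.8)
The plan is to first pin down the numerical value and then produce a bijection in the style of the earlier insets. Applying formula (\ref{1q}) with $n-1$ main blocks, each of size $q=3$, we get $q_1\cdots q_{n-1}=3^{n-1}$ and $\sum_{i=1}^{n-1}q_i=3(n-1)$, so that
\[
{1,n-1\choose 1,3}=3^{n-1}\cdot\frac{3(n-1)-(n-1)+2}{2}=n\cdot 3^{n-1}.
\]
On the combinatorial side, colouring $n-1$ of the $n$ points on the line amounts to choosing which single point is left blank ($n$ choices) and then assigning one of three colours to each of the remaining $n-1$ points ($3^{n-1}$ choices), so there are exactly $n\cdot 3^{n-1}$ such configurations. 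It therefore suffices to biject the $n$-insets with these configurations.

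I would set up the inset model with main blocks $X_i=\{x_{i1},x_{i2},x_{i3}\}$ for $i=1,\dots,n-1$ and additional block $Y=\{y\}$, and label the points $P_1,\dots,P_n$. An $n$-inset meets all $n-1$ main blocks and carries exactly one extra element, so it is of precisely one of two disjoint types: either it contains $y$ together with one element from each main block, or it omits $y$ and takes two elements from a single block $X_j$ and one element from each of the others.

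The bijection would then be defined case by case. For an inset containing $y$, declare $P_n$ blank and colour $P_i$ by the index $c_i$ of its representative $x_{ic_i}$, for $i=1,\dots,n-1$; this realises the $3^{n-1}$ configurations with $P_n$ blank. For an inset that doubles a block $X_j$ with $j\in\{1,\dots,n-1\}$, declare $P_j$ blank, colour each $P_i$ with $i\neq j$ and $i\le n-1$ by the index of its representative in $X_i$, and colour $P_n$ by the image of the chosen $2$-subset of $X_j$ under a fixed bijection $\binom{[3]}{2}\to[3]$ (for instance, send each $2$-subset to its missing element). For each fixed $j$ this produces $3\cdot 3^{n-2}=3^{n-1}$ configurations with $P_j$ blank.

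The main thing to verify is that the two cases are jointly surjective and mutually exclusive: together they realise each of the $n$ possible blank points exactly once, and for each blank point all $3^{n-1}$ colourings, giving the total $(n-1)\cdot 3^{n-1}+3^{n-1}=n\cdot 3^{n-1}$ in agreement with the count above. The one mild subtlety—and the step I would treat most carefully—is the mismatch between the $n-1$ main blocks and the $n$ points: in the doubled-block case the chosen $2$-subset must simultaneously encode which point is blank and supply the colour of the leftover point $P_n$, which is precisely what the bijection $\binom{[3]}{2}\to[3]$ accomplishes.
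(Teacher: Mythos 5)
Your bijection is exactly the one in the paper: insets containing $y$ leave $P_n$ blank, and insets doubling a block $X_j$ leave $P_j$ blank with $P_n$ coloured by the missing element of the chosen $2$-subset. The only addition is your preliminary numerical check via formula (\ref{1q}), which is harmless; the proof is correct and essentially identical to the paper's.
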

\begin{proof}
Let $X_i=\{x_{i1},x_{i2},x_{i3}\},\;(i=1,\ldots,n-1)$ be the main
blocks of $X,$ and $Y=\{y\}$ be the additional block.  We define a
correspondence between $n$-insets of $X$ and the above-defined
colorings in the following way:
\begin{enumerate}
\item If $U$ is an $n$-inset such that $y \in U,$ then $U$ contains
exactly one element from each of the main blocks. If $x_{ij}\in
U,$ then we color the point $i$ by the color $j.$
     In this way, the point $n$ remains uncolored.

\item If $y \notin U$, then there is exactly one main block $k,$
two elements of which are in $U.$  In this case, the $k$th point
remains uncolored. If $x_{km}\not\in U,$ then the point $n$ is
colored by the color $m.$  If $x_{ij}\in U,\;(i\not=k),$ then we
color the point $i$ by the color $j.$

The correspondence is clearly bijective.
\end{enumerate}

\end{proof}

\begin{proposition}
Assume $n>1.$ Then,
  \begin{equation}\label{dif}{1,n-1\choose 1,\;3}=\sum_{X\subseteq Y\subseteq [n]}(|Y|-|X|).\end{equation}
\end{proposition}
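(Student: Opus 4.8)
The plan is to evaluate both sides of (\ref{dif}) independently and check that each equals $n\cdot 3^{n-1}$. For the left-hand side I would specialize the explicit formula (\ref{1q}) to $n-1$ main blocks, each of size $3$. Putting $q_1=\cdots=q_{n-1}=3$ there gives
\[{1,n-1\choose 1,3}=3^{n-1}\cdot\frac{3(n-1)-(n-1)+2}{2}=3^{n-1}\cdot\frac{2(n-1)+2}{2}=n\cdot 3^{n-1}.\]
(Alternatively, the proposition immediately preceding this one already identifies ${1,n-1\choose 1,3}$ with the number of colorings of $n-1$ among $n$ collinear points by three colors, which is $n\cdot 3^{n-1}$ on choosing the uncolored point in $n$ ways and coloring the rest in $3^{n-1}$ ways.)

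For the right-hand side, the key observation is that $X\subseteq Y$ forces $|Y|-|X|=|Y\setminus X|$, so the double sum counts, with multiplicity, the elements lying in $Y$ but not in $X$; equivalently it is the number of triples $(X,Y,e)$ with $X\subseteq Y\subseteq[n]$ and $e\in Y\setminus X$. I would then count these triples element by element. Prescribing a pair $X\subseteq Y$ is the same as assigning to each of the $n$ elements one of exactly three compatible states — outside $Y$, inside $Y$ but outside $X$, or inside both — and the distinguished element $e$ is forced into the middle state. Fixing $e$ in the middle state leaves the other $n-1$ elements ranging freely over $3^{n-1}$ combinations, while $e$ itself ranges over the $n$ elements of $[n]$. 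Hence there are $n\cdot 3^{n-1}$ triples, and the two evaluations agree, proving (\ref{dif}).

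Since the paper favors bijective arguments, I would note that this can be phrased as a bijection consistent with the preceding proposition: a coloring of $n-1$ of the $n$ points is a pair consisting of an uncolored point $e\in[n]$ together with a map from $[n]\setminus\{e\}$ to a three-element set of colors, and this matches a triple $(X,Y,e)$ under the correspondence that sends $e$ to the forced middle state and the three colors to the three membership states of the remaining elements. Composing with the inset-to-coloring bijection of the preceding proposition then yields a direct bijection between $n$-insets of $X$ and the triples counted on the right.

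There is no genuine obstacle here; the only point requiring care is the bookkeeping of the double sum on the right, namely recognizing the three-state-per-element structure and that $|Y|-|X|$ measures exactly the number of middle-state elements. If one instead prefers to sum over $Y$ first, the inner sum $\sum_{X\subseteq Y}(|Y|-|X|)$ equals $|Y|\,2^{|Y|-1}$ by the identity $\sum_{j}j\binom{y}{j}=y\,2^{y-1}$, after which $\sum_{Y\subseteq[n]}|Y|\,2^{|Y|-1}=\sum_{y}\binom{n}{y}y\,2^{y-1}=n\cdot 3^{n-1}$; but the three-state counting avoids these identities and is the route I would present.
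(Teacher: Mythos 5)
Your proof is correct, and it follows the same overall strategy as the paper — showing both sides equal $n\cdot 3^{n-1}$ — but the two evaluations are carried out differently. For the left-hand side the paper counts $n$-insets directly ($3^{n-1}$ insets containing $y$ plus $3(n-1)3^{n-2}$ insets taking two elements from one main block), whereas you specialize the explicit formula (\ref{1q}); both give $n3^{n-1}$ with little effort. The real divergence is on the right-hand side: the paper groups the pairs $X\subseteq Y$ by the value $k=|Y|-|X|$, shows there are $\binom{n}{k}2^{n-k}$ such pairs, and then invokes the identity $\sum_{k=1}^n k\binom{n}{k}2^{n-k}=n3^{n-1}$ (stated as "easy to see"). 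Your triple-counting argument — viewing a pair $X\subseteq Y$ as a three-state assignment on $[n]$ and counting triples $(X,Y,e)$ with $e$ in the middle state — reaches $n\cdot 3^{n-1}$ directly without any binomial identity, and it composes naturally with the coloring bijection of the preceding proposition to give a fully bijective proof, which is more in the spirit the paper advertises. The paper's grouping-by-$k$ route has the minor advantage of exhibiting the intermediate refinement $\binom{n}{k}2^{n-k}$ explicitly, but your version is cleaner and self-contained.
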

\begin{proof}
Take $k,$ such that $1\leq k\leq n.$ We count all pairs $X\subseteq
Y\subseteq [n],$ such that $|Y|-|X|=k.$ If
$X_k=\{x_1,x_2,\ldots,x_k\}$ is a given $k$-subset of $[n],$ and
if  $Z_k$ is arbitrary subset of
$[n]\setminus\{x_1,x_2,\ldots,x_k\},$ ($\emptyset$ included), then
$|X_k\cup Z_k|-|Z_k|=k.$ Hence, for a fixed $X_k$ there are
$2^{n-k}$ mutually different $Z_k$'s. On the other hand, there are
${n\choose k}$ mutually different $X_k$'s. We conclude that there
are ${n\choose k}2^{n-k}$ pairs $(U,V)$ of subsets, where $U-V$ has
 $k$ elements. The sum on the right side of (\ref{dif}) thus equals
$\sum_{k=1}^nk{n\choose k}2^{n-k}.$ It is easy to see that
\[\sum_{k=1}^nk{n\choose k}2^{n-k}=n3^{n-1}.\] On the other hand,
if $X_{i}=\{x_{i1},x_{i2},x_{i3}\},\;(i=1,2,\ldots,n-1)$ are the
main blocks, and $Y=\{y\}$ the additional block of $X,$ then there
are obviously $3^{n-1}$ of $n$-insets of $X$ containing $y.$ The
$n$-insets of $X,$ not containing $y,$ must contain two elements
from one main block, and one element from the remaining main
blocks. For this, we have $3(n-1)3^{n-2}$ possibilities. Hence,
there are $3^{n-1}+3(n-1)3^{n-2}=n3^{n-1}$ $n$-insets of $X.$
\end{proof}

We next prove that our function, in one particular case, counts the
number of the so-called weak compositions. We let $c(n)$ denote the
number of the compositions of $n.$ It is well-known that
$c(n)=2^{n-1},\;(n>0).$ Additionally, we put $c(0)=1.$
Compositions in which some parts may be zero are called  weak
compositions.  We let $cw(r,s)$ denote the number of the weak
compositions of $r$ in which $s$ parts equal zero.

\begin{proposition}The following formula is true:
\begin{equation}\label{wc1}cw(r,s)=\sum_{j_1+j_2+\cdots+j_{s+1}=r}c(j_1)c(j_2)\cdots c(j_{s+1}),\end{equation}
where the sum is taken over  $j_t\geq 0,\;(t=1,2,\ldots,s+1).$
\end{proposition}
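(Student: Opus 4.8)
The plan is to prove the identity bijectively by reading a weak composition through the positions of its zero parts. Let $w=(a_1,a_2,\ldots,a_\ell)$ be a weak composition of $r$ having exactly $s$ parts equal to $0$. The $s$ zeros split the sequence into $s+1$ consecutive runs of strictly positive parts (the run before the first zero, the runs between consecutive zeros, and the run after the last zero); some of these runs may be empty, which happens precisely when two zeros are adjacent or when $w$ begins or ends with a zero. I would let $j_t$ denote the sum of the entries of the $t$-th run. Since the zeros contribute nothing to the total, $j_1+j_2+\cdots+j_{s+1}=r$, and the $t$-th run is by definition an ordinary composition of $j_t$ into positive parts (the empty run being the empty composition of $0$).

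The key step is to observe that this assignment is a bijection onto the data indexed on the right-hand side. For a fixed tuple $(j_1,\ldots,j_{s+1})$ of nonnegative integers with $j_1+\cdots+j_{s+1}=r$, the $t$-th run may be chosen to be any of the $c(j_t)$ compositions of $j_t$, independently across $t$; here the convention $c(0)=1$ records the unique empty run. Hence the number of weak compositions of $r$ with exactly $s$ zeros that produce this tuple is $c(j_1)c(j_2)\cdots c(j_{s+1})$. The inverse map takes such a tuple together with a choice of composition for each $j_t$ and concatenates the $s+1$ runs, inserting one zero between each pair of consecutive runs; this yields a weak composition of $r$ whose zero parts are exactly the $s$ inserted ones, because every run consists of positive entries only. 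Summing over all admissible tuples gives formula (\ref{wc1}).

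The main point requiring care is the bookkeeping of the empty runs: leading, trailing, and ``back-to-back'' zeros must all be accounted for, and it is exactly here that setting $c(0)=1$ makes the product formula correct rather than forcing an awkward case split. One should also note that the number of parts $\ell$ of the weak composition is not fixed in advance but equals $s$ plus the total number of positive parts, which is at most $r$; the run decomposition handles this automatically, since the positive parts always assemble into an ordinary composition of $r$ of finite length. Once the bijection and the convention $c(0)=1$ are in place the identity is immediate, so I do not anticipate any genuinely hard step beyond verifying that the correspondence is well-defined in these boundary cases.
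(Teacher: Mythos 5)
Your proof is correct. The underlying combinatorial idea is the same as the paper's -- the zero parts act as separators, the maximal runs of positive parts between them are ordinary compositions, and the convention $c(0)=1$ absorbs the empty runs -- but the proof architecture differs: the paper proceeds by induction on $s$, splitting a weak composition at its \emph{first} zero to obtain the recurrence $cw(r,s)=\sum_{j\ge 0}c(j)\,cw(r-j,s-1)$ and then invoking the induction hypothesis, whereas you unroll this completely and exhibit a single direct bijection between weak compositions of $r$ with exactly $s$ zeros and tuples $\bigl((j_1,\ldots,j_{s+1}),(C_1,\ldots,C_{s+1})\bigr)$ where $C_t$ is a composition of $j_t$. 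Your version buys a one-shot argument with no induction and makes the product structure of the right-hand side visible immediately; the paper's version buys a recurrence for $cw(r,s)$ that is mildly useful in its own right. Your attention to the boundary cases (leading, trailing, and adjacent zeros giving empty runs, and the fact that the total number of parts is not fixed in advance) covers exactly the points where such a bijection could otherwise go wrong, so the argument is complete.
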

\begin{proof} We use induction with
respect to $s.$ For $s=0,$ the assertion is obvious. Assume that
the assertion is true for $s-1.$ Using the induction hypothesis,
we may write  equation (\ref{wc1}) in the following form:
\begin{equation}\label{wc}cw(r,s)=\sum_{j=0}^nc(j)cw(r-j,s-1).\end{equation}
Let $(i_1,i_2,\ldots,)$ be a weak composition of $r,$ in which
exactly $s$ parts equal $0.$  Assume that $i_p$ is the first part
  equal to zero. Then, $(i_1,\ldots,i_{p-1})$ is a composition
of $i_1+\cdots+i_{p-1}=j$ without zeroes. Note that $j$ can be
zero. Furthermore, $(i_{p+1},\cdots)$ is a weak composition of
$r-j$ with $s-1$ zeroes. For a fixed $j,$ there are
$c(j)cw(r-j,s-1)$ such compositions. Changing $j,$ we conclude
that the right side of (\ref{wc}) counts all weak compositions.
\end{proof}

\begin{proposition} Let $r,s$ be positive integers. Then,
 \begin{equation}\label{slko}cw(r,s)={s+1,n-1\choose s,\;2}.\end{equation}
 \end{proposition}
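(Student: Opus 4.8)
The plan is to prove \eqref{slko} by generating functions, bootstrapping off the previous proposition \eqref{wc1} for the left-hand side and the explicit formula \eqref{q=2} for the right-hand side. The first observation is that the ordinary generating function of the composition counts is
\[\sum_{k\ge 0}c(k)x^{k}=1+\sum_{k\ge 1}2^{k-1}x^{k}=\frac{1-x}{1-2x},\]
since $c(0)=1$ and $c(k)=2^{k-1}$. Because \eqref{wc1} expresses $cw(r,s)$ as the $(s+1)$-fold convolution of the sequence $c$, it is exactly the coefficient of $x^{r}$ in the $(s+1)$st power of this series, so
\[\sum_{r\ge 0}cw(r,s)\,x^{r}=\Big(\frac{1-x}{1-2x}\Big)^{s+1}.\]
Noting that a weak composition of $0$ with $s$ zeros is the unique all-zero sequence, we have $cw(0,s)=1$, and therefore the part of this series with $r\ge 1$ equals $\big(\frac{1-x}{1-2x}\big)^{s+1}-1$.

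Next I would compute the generating function of the right-hand side of \eqref{slko}. Classifying each $(r-1+s)$-inset by the number $j$ of main blocks contributing both of their two elements (the remaining $r-1-j$ blocks contribute one of two, and exactly $s-j$ elements are taken from the additional block of size $s+1$) gives
\[{s+1,\,r-1\choose s,\,2}=\sum_{j\ge 0}\binom{r-1}{j}2^{\,r-1-j}\binom{s+1}{j+1},\]
which is also a direct rewriting of \eqref{q=2}. Setting $N=r-1$ and summing over $r\ge 1$ with the standard series $\sum_{N\ge j}\binom{N}{j}(2x)^{N-j}=(1-2x)^{-(j+1)}$ turns the $j$-sum into a binomial expansion:
\[\sum_{r\ge 1}{s+1,\,r-1\choose s,\,2}\,x^{r}=\sum_{j\ge 0}\binom{s+1}{j+1}\frac{x^{\,j+1}}{(1-2x)^{\,j+1}}=\Big(1+\frac{x}{1-2x}\Big)^{s+1}-1=\Big(\frac{1-x}{1-2x}\Big)^{s+1}-1.\]
Comparing the two closed forms proves \eqref{slko} for every positive integer $r$ (and every $s\ge 0$).

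The main obstacle is the second step: one must correctly account for the $\binom{s+1}{j+1}$-weight and recognize that the shifted sum $\sum_{j}\binom{s+1}{j+1}\big(\tfrac{x}{1-2x}\big)^{j+1}$ is precisely the binomial expansion of $\big(1+\tfrac{x}{1-2x}\big)^{s+1}$ with its constant term removed; the $-1$ here is exactly the missing $r=0$ term, which is why the identity is asserted only for positive $r$ and why matching the two generating functions requires this boundary bookkeeping. An alternative route would be to show directly that ${s+1,\,r-1\choose s,\,2}$ obeys the recurrence \eqref{wc} with base case ${1,\,r-1\choose 0,\,2}=2^{\,r-1}=cw(r,0)$ and then induct on $s$; I expect this to be comparably involved, because the term $j=r$ in \eqref{wc} forces one to treat the boundary value $cw(0,s-1)=1$ separately, whereas the generating-function argument absorbs that boundary automatically.
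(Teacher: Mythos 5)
Your proof is correct, but it follows a different route from the paper's. The paper works entirely inside the convolution formula \eqref{wc1}: it collects the terms according to how many of the indices $j_t$ are zero, substitutes $c(j)=2^{j-1}$ for the nonzero ones, recognizes the inner sum as the number ${r-1\choose s-i}$ of compositions of $r$ into $s-i+1$ parts, and observes that the resulting expression $2^{r-s-1}\sum_i 2^i{s+1\choose i}{r-1\choose s-i}$ is literally the right-hand side of \eqref{q=2} with $m=s+1$, $n=r-1$, $k=s$. You instead encode both sides as power series in $x$: the left side becomes $\bigl(\tfrac{1-x}{1-2x}\bigr)^{s+1}$ because \eqref{wc1} is an $(s+1)$-fold convolution, and the right side is summed via $\sum_{N\ge j}\binom{N}{j}(2x)^{N-j}=(1-2x)^{-(j+1)}$ and the binomial theorem, with the $-1$ correctly accounting for the $r=0$ term $cw(0,s)=1$. (Your intermediate rewriting ${s+1,\,r-1\choose s,\,2}=\sum_{j}\binom{r-1}{j}2^{r-1-j}\binom{s+1}{j+1}$ is just \eqref{q=2} after the substitution $j=s-i$, and your inset classification re-derives it, so both arguments ultimately rest on the same two ingredients \eqref{wc1} and \eqref{q=2}.) The paper's computation is shorter and purely finite, while your generating-function version makes the boundary bookkeeping at $r=0$ explicit and automatic and would generalize more readily; note also that you have silently corrected the typo in the statement, where $n-1$ should read $r-1$.
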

 \begin{proof}

 Collecting terms in (\ref{wc1}), in which the indices  $j_t$ equal zero, we obtain
\[cw(r,s)=\sum_{i=0}^s{s+1\choose i}\sum_{j_1+j_2+\cdots+j_{s-i+1}=r}
2^{j_1-1}2^{j_2-1}\cdots 2^{j_{s-i+1}-1},\] where the sum is taken
over $j_t\geq 1.$ Hence,
\[cw(r,s)=2^{r-s-1}\sum_{i=0}^s2^{i}{s+1\choose
i}\sum_{j_1+j_2+\cdots+j_{s-i+1}=r}1.\] Since the last sum is
taken over all compositions of $r$ with $s-i+1$ parts, we finally
have \[cw(r,s)=2^{r-s-1}\sum_{i=0}^{s+1}2^{i}{s+1\choose
i}{r-1\choose s-i},\] and the proof follows from (\ref{q=2}).
\end{proof}
\begin{rema} The formula (\ref{slko}) produces the following sequences in OEIS \cite{slo}
 {A000297}, {A058396}, {A062109}, {A169792}, {A169793}, {A169794},{A169795}, {A169796}, {A169797}.
 \end{rema}

We conclude the paper with three chessboard combinatorial
problems.

\begin{proposition} The number \[{n-1,\;2\choose 1, n}\]
equals the number of possible rook moves on an $n \times
n$ chessboard {A035006}.
\end{proposition}

\begin{proof}
Let $X_i=\{x_{i1},\ldots,x_{i,n}\},\;(i=1,2)$ be the main blocks
of $X,$ and $Y=\{y_1,y_2,\ldots,y_{n-1}\}$ be the additional block.
 We need a bijection of  $3$-insets of the set $X,$
  and all the possible rook moves on an $n\times n$ chessboard.
 The correspondence goes as follows:
 \[\begin{array}{cc}
 1.&\{x_{1i},x_{1j},x_{2k}\},\leftrightarrow
  [(i,k) \rightarrow (j,k)],\\
2.&\{x_{1k},x_{2i},x_{2j}\},\leftrightarrow
  [(j,k) \rightarrow (i,k)],\\
3.&\{x_{1i},x_{2j},y_{k}\},\leftrightarrow
  [(i,k) \rightarrow (i,j)],\;(j\not=k),\\
4.&\{x_{1i},x_{2j},y_{j}\},\leftrightarrow
  [(i,n) \rightarrow (i,j)],\;(j=k).\end{array}\]

According to (\ref{ndva}), the number of possible moves equals
$2(n-1)n^2.$
\end{proof}

\begin{proposition} If $n\geq 2,$ then the number \[{1,n\choose n-2,2}\] equals
 the total number of possible bishop moves on an $n\times n$ chessboard
 {A002492}.
 \end{proposition}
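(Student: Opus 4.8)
The plan is to evaluate ${1,n\choose n-2,2}$ in closed form using the explicit formula for the $q=2$ case and then to check, by a direct geometric count, that the resulting number is exactly the number of (directed) bishop moves. Here the set $X$ has $n$ two-element main blocks $X_i=\{x_{i1},x_{i2}\}$ and a one-element additional block $Y=\{y\}$, so $|X|=2n+1$; a $(2n-2)$-inset is a subset of size $2n-2$ meeting every $X_i$. Passing to complements, such an inset is the same thing as a $3$-subset of $X$ containing no whole main block, a description I will use to sanity-check the arithmetic.

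First I would apply (\ref{q=2}) with $m=1$, $k=n-2$. Only the terms $i=0,1$ survive, giving
\[{1,n\choose n-2,2}=2^{2}\left[{n\choose n-2}+2{n\choose n-3}\right]=4{n\choose 2}+8{n\choose 3}=\frac{2n(n-1)(2n-1)}{3}.\]
The same value also drops out of (\ref{f1}): again only $i=0,1$ contribute, yielding ${2n+1\choose 3}-n(2n-1)$. As an independent check, the complementary $3$-subsets split into those omitting $y$ (two elements from two distinct blocks, $4{n\choose 2}$ of them) and those keeping $y$ (one element from each of three distinct blocks, $8{n\choose 3}$ of them), reproducing $4{n\choose 2}+8{n\choose 3}$.

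It then remains to count the directed bishop moves on the $n\times n$ board and match. The board has $2n-1$ diagonals in each of the two directions, with lengths $1,2,\ldots,n-1,n,n-1,\ldots,2,1$, and a diagonal of length $\ell$ carries $\ell(\ell-1)$ directed moves. Hence the total number of bishop moves is
\[2\left[n(n-1)+2\sum_{\ell=1}^{n-1}\ell(\ell-1)\right]=2\,n(n-1)\cdot\frac{2n-1}{3}=\frac{2n(n-1)(2n-1)}{3},\]
which agrees with the value of ${1,n\choose n-2,2}$ computed above, completing the proof.

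Finally, in keeping with the bijective style of the paper, one could instead try to exhibit an explicit correspondence between $(2n-2)$-insets and bishop moves, organised as in the rook proposition. The natural partition sends the $4{n\choose 2}$ insets omitting $y$ and the $8{n\choose 3}$ insets keeping $y$ to the two pieces of the diagonal count $4\sum_{\text{diag}}{\ell\choose 2}=4{n\choose 2}+8{n\choose 3}$ obtained from the hockey-stick identity $\sum_{\ell=1}^{n-1}{\ell\choose 2}={n\choose 3}$. The main obstacle is purely geometric: unlike the rook case, the presence of two diagonal directions together with the choice of orientation makes the bookkeeping of which block-elements encode the diagonal, the two endpoints, and the direction of the move considerably more delicate, so I expect writing out explicit inset tables to be the hard part, whereas the algebraic route above is immediate.
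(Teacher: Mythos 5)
Your proof is correct, but it takes a genuinely different route from the paper's. The paper gives two arguments: first, a bijective one in which only the moves in one of the four diagonal directions are counted and put in explicit correspondence with one quarter of the complements of the $(2n-2)$-insets (the $3$-subsets of $X$ meeting no main block twice), yielding $2{n\choose 3}+{n\choose 2}=\frac{n(n-1)(2n-1)}{6}$ and hence the total $\frac{2n(n-1)(2n-1)}{3}$; second, a recurrence argument showing that the number of bishop moves $a_n$ and the quantity ${1,n\choose n-2,2}$ both satisfy $a_{n+1}-a_n=4n^2$. Your diagonal-by-diagonal count $\sum_{\mathrm{diag}}\ell(\ell-1)$, combined with the closed-form evaluation of ${1,n\choose n-2,2}$ via (\ref{q=2}) (and your independent check via (\ref{f1})), is more direct and self-contained than either: it avoids both the choice of a preferred move direction and the reflection/recurrence bookkeeping. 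What it gives up is precisely what the paper is showcasing, namely an explicit bijection between insets and moves; your closing remarks correctly identify why that bijection is harder here than in the rook proposition.

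One small slip in your sanity check: the two classes of complementary $3$-subsets are labelled backwards. A $3$-subset omitting $y$ consists of three elements taken from three distinct main blocks, so there are $8{n\choose 3}$ of those; a $3$-subset containing $y$ has its other two elements in two distinct main blocks, giving $4{n\choose 2}$. The total $4{n\choose 2}+8{n\choose 3}$ is unchanged, so nothing downstream is affected, but the labels should be swapped.
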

\begin{proof} We give two proofs.
\begin{enumerate}
\item This proof is bijective.

It is enough to count the number of moves from the field $(i,j)$ to
the field
 $(i+k,j+k),$ for a positive $k,$ such that $i+k\leq n,\;j+k\leq n.$ If $N$ is the number of such moves,
  then $4N$ is the number of all possible moves.

 Let set $X$ consists of $n$ main blocks $X_i=\{x_{i,1},x_{i,2}\},\;(i=1,2,\ldots,n),$
 and the additional block $Y=\{y\}.$
  We define a bijective correspondence between the set of  moves described above and one fourth of all $(n-2)$-insets of $X.$
 In fact, we define a bijection between the moves and the complements of $(n-2)$-insets
 of $X.$
 The complements are $3$-sets $\{a,b,c\}$ of $X,$ such that no two of its elements
 can be in the same main block.
The correspondence goes as  follows:
\begin{enumerate}
\item
$\{x_{i,1},x_{j,1},x_{k,1}\}\leftrightarrow [(i,j)\rightarrow(i+k-j,k)],\;(1\leq i<j<k).$
In this correspondence we have ${n\choose 3}$ elements.
\item   $\{x_{i,2},x_{j,2},x_{k,2}\}\leftrightarrow
[(j,i)\rightarrow(k,i+k-j),\;( i<j<k)].$
In this correspondence we also have ${n\choose 3}$ elements.
\item  $\{x_{i,1},x_{j,2},y\}\leftrightarrow [(i,i)\rightarrow(j,j)],\;(i<j).$
Now, we have ${n\choose 2}$ moves.
\end{enumerate}

It is clear that all moves are counted. For this we need
\[2{n\choose 3}+{n\choose 2}=\frac{n(2n-1)(n-1)}{6}\] insets. On the other hand, according to (\ref{q=2}), we have
\[{1,\;n\choose n-2,2}=\frac{2n(2n-1)(n-1)}{3},\] which proves the
assertion.

\item
 We let $T_n$ be an $n\times n$ chessboard, and let $a_n$ denote the total number of possible bishop
moves. We may consider that $T_{n+1}$ is obtained by adding to
$T_n$ one row at the top, and one column at the right. We
calculate $a_{n+1}-a_n,$ which is the number of moves on $T_{n+1}$
that are not possible on $T_n.$
\begin{enumerate} \item Firstly, if the bishop is on the main diagonal of $T_n$ or below, then only one additional  move is produced. We have thus obtained $\frac{n(n+1)}{2}$ new moves.
\item
For the bishop on $T_n,$ and above the main diagonal, there are
$3$ additional moves, or $3\frac{n(n-1)}{2}$ additional moves in total. \item
For each bishop on $T_{n+1}$ which is not on $T_n,$ we have $n$
additional moves. Hence, we have $n(2n+1)$ additional
    moves in total.
\end{enumerate}

We thus have $\frac{n(n+1)}{2}+3\frac{n(n-1)}{2}+n(2n+1)=4n^2$ additional  moves in total. Hence, the following recurrence is obtained:
\[a_{n+1}-a_n=4n^2.\] It is easy to see that ${1,\;n\choose n-2,2}$
satisfies this recurrence.

\end{enumerate}
\end{proof}

Since the queen can move both as a rook and  as a bishop,
we have \begin{proposition} The number \[{1,n\choose
n-2,2}+{n-1,\;2\choose 1,\; n},\;(n\geq 2)\]
 equals  the  possible queen moves on an $n \times n$ chessboard.
 This number  is $\frac{2n(5n-1)(n-1)}{3}$ {A035005}.
 \end{proposition}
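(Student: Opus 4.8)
The plan is to exploit the fact that a queen's move on a chessboard is either a rook move or a bishop move, and that these two families are disjoint: a rook move is purely horizontal or vertical, whereas a bishop move is strictly diagonal, so no single move belongs to both types. Consequently the set of all queen moves is the disjoint union of the set of rook moves and the set of bishop moves, and its cardinality is the sum of the two cardinalities, which is precisely the additive expression in the statement.

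First I would invoke the earlier proposition establishing that ${n-1,\;2\choose 1,\;n}$ counts the possible rook moves on the $n\times n$ board, together with the evaluation ${n-1,\;2\choose 1,\;n}=2(n-1)n^2$ obtained there from (\ref{ndva}). Next I would invoke the proposition showing that ${1,n\choose n-2,2}$ counts the possible bishop moves, whose closed form $\frac{2n(2n-1)(n-1)}{3}$ follows from (\ref{q=2}). Adding these two closed forms is then the only computation that remains.

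The final step is the routine algebraic verification
\[2(n-1)n^2+\frac{2n(2n-1)(n-1)}{3}=\frac{2n(n-1)}{3}\big(3n+(2n-1)\big)=\frac{2n(5n-1)(n-1)}{3},\]
which agrees with the value {A035005} claimed in the statement. I expect no genuine obstacle here; the only point worth an explicit sentence is the disjointness of the rook and bishop move sets, which guarantees that the additive count neither overcounts nor undercounts, so that the union really does enumerate all queen moves and the sum of the two counting functions gives the desired total.
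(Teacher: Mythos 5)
Your proposal is correct and follows exactly the paper's (one-line) argument: the queen's moves are the disjoint union of the rook's and bishop's moves, so the count is the sum of the two previously established quantities. Your explicit algebraic verification $2(n-1)n^2+\tfrac{2n(2n-1)(n-1)}{3}=\tfrac{2n(5n-1)(n-1)}{3}$ is accurate and merely spells out what the paper leaves implicit.
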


Finally, we give a number of additional
configurations, counted by our function, and  described in
sequences in OEIS \cite{slo}.

\begin{center}
\begin{tabular}{cc} Function&Numbers of sequences\\
${0,n\choose k,2}$ &\begin{tabular}{c}{A000918}, {A001787}, {A001788},
{A001789},  {A003472},\\{A054849}, {A002409}, {A054851},
{A140325}, {A140354}, {A172242}\end{tabular}\\\hline\\
${1,n\choose 1,Q}$ &\begin{tabular}{c}{A059270}, {A094952}, {A069072},
 {A007531}, {A000466}, {A019583}, {A076301}\end{tabular}
 \\\hline\\
${m,1\choose k,q}$&\begin{tabular}{c}
{A015237}, {A160378}, {A027620}, {A028347}, {A028560},\\
{A034428}, {A000567}, {A045944}, {A123865}, {A034828}
\end{tabular}\\\hline\\
${m,2\choose k,Q}$&\begin{tabular}{c}{A080838},
{A015237}, {A091361}, {A017593}, {A063488}\\
{A039623}, {A116882}, {A081266},
{A202804}, {A194715}
\end{tabular}\\\hline\\
${m,n\choose k,2}$&\begin{tabular}{c}{A002002},
{A049600},
{A142978}, {A099776}, {A014820},\\
{A069039}, {A099195}, {A006325}, {A061927}, {A191596}\\
 {A001792}, {A045623}, {A045891},  {A034007}, {A111297}\\
 {A159694}, {A001788}, {A049611}, {A058396},
 {A158920}
\end{tabular}\\\hline\\

\end{tabular}

\end{center}

\end{document}